\newtheorem{theorem}{Theorem}[section]
\newtheorem{definition}[theorem]{Definition}
\newtheorem{corollary}[theorem]{Corollary}
\newtheorem{example}[theorem]{Example}
\newtheorem{remark}[theorem]{Remark}
\newtheorem{proposition}[theorem]{Proposition}
\newtheorem{lemma}[theorem]{Lemma}
\def\al{\alpha}
\def\vp{\varphi}
\def\pa{\partial}
\def\gm{\gamma}
\def\lra{\longrightarrow}
\def\ol{\overline}
\def\Ker{\operatorname{Ker}}
\def\Im{\operatorname{Im}}
\def\Coker{\operatorname{Coker}}
\DeclareMathOperator{\Der}{Der}
\DeclareMathOperator{\Hom}{Hom}
\DeclareMathOperator{\End}{End}
\def\D{\operatorname{D}}
\def\id{\operatorname{id}}
\def\cl{\operatorname{cl}}
\DeclareMathOperator{\A}{\sf {A}}
\DeclareMathOperator{\M}{\sf {M}}
\DeclareMathOperator{\E}{\sf {E}}
\DeclareMathOperator{\V}{\sf {V}}
\DeclareMathOperator{\AWB}{\sf {AWB}}
\DeclareMathOperator{\XAWB}{\sf {XAWB}}
\DeclareMathOperator{\IAWB}{\sf {IAWB}}
\DeclareMathOperator{\AAA}{\mathcal{A}}
\DeclareMathOperator{\MM}{\mathcal{M}}
\DeclareMathOperator{\EE}{\mathcal{E}}
\DeclareMathOperator{\ab}{\mathrm{ab}}
\DeclareMathOperator{\Ext}{Ext}
\DeclareMathOperator{\XExt}{XExt}
\DeclareMathOperator{\B}{\sf B}
\DeclareMathOperator{\C}{\sf C}
\DeclareMathOperator{\Z}{\mathcal{ Z}}
\DeclareMathOperator{\e}{\sf e}
\newcommand{\K}{\mathbb{K}}
\numberwithin{equation}{section}
\title[Wells sequence and crossed extensions of algebras with bracket]{Wells type exact sequence and crossed extensions of algebras with bracket}
\author[J. M. Casas]{Jos\'e Manuel Casas$^1$}
\address{$^1$Department of Applied Mathematics I, and CITMAga, E. E. Forestal,  University of Vigo, 36005 Pontevedra, Spain}
\email{jmcasas@uvigo.es}
\author[E. Khmaladze]{Emzar Khmaladze$^2$}
\address{$^2$The University of Georgia, Kostava St. 77a, 0171 Tbilisi, Georgia \& A. Razmadze Mathematical Institute of Tbilisi State University,
	Tamarashvili St. 6, 0177 Tbilisi, Georgia   }
\email{e.khmaladze@ug.edu.ge}
\author[M. Ladra]{Manuel Ladra$^3$}
\address{$^3$Department of Mathematics, CITMAga, Universidade de Santiago de Compostela, 15782 Santiago de Compostela, Spain}
\email{manuel.ladra@usc.es}
\subjclass{16E40, 16E99, 16W99}
\keywords{Algebras with bracket, cohomology, crossed module, Wells exact sequence}
\begin{document}

\begin{abstract}
	We study the extensibility problem of a pair of derivations associated with an abelian extension of algebras with bracket, and derive an exact sequence of the Wells type.	
	We introduce crossed modules for algebras with bracket and prove their equivalence with internal categories in the category of algebras with bracket.
	We interpret the set of equivalence classes of crossed extensions as the second cohomology.
	Finally, we construct an eight-term exact sequence in the cohomology of algebras with bracket.
\end{abstract}

\maketitle

\section{Introduction}

The concept of a new algebraic structure called algebra with bracket was introduced in \cite{CP} as a generalization of a (non-commutative) Poisson algebra.
It is an associative algebra equipped with a bracket operation $[-,-]$ related to the multiplication by the following fundamental relation:
\[
[ab,c]=[a,c]b+a[b,c].
\]
The origins of such generalization of Poisson algebras can be found in the physics literature (see, e.g. \cite{Ka}).

Based on the careful analyses of the free objects in the category of algebras with bracket and using the Hochschild complex for associative algebras, an explicit cochain complex computing Quillen cohomology of algebras with bracket is constructed in \cite{CP}. In the same paper, zero and first cohomologies are characterized by means of derivations and abelian extensions of algebras with bracket, respectively. Later, in \cite{Ca}, a homology with trivial coefficients of algebras with bracket is developed with applications to universal central extensions, and among other results, the five-term exact homology and cohomology sequences are obtained.

In this paper, we aim to investigate the following three tasks in the context of algebras with bracket:

\noindent {\bf 1}. \emph{To study the extensibility problem for derivations associated with an abelian extension.}

\noindent In the case of groups, such an extensibility problem of automorphisms goes back to Baer~\cite{Ba}.
Important progress was archived by Wells in extensions of abstract groups \cite{We}, where a map, later called a Wells map, is defined and an exact sequence, also called a Wells exact sequence,  connecting various automorphism groups is constructed.
The Wells map and the extensibility of a pair of automorphisms associated with an (abelian) extension were studied in the context
of various algebraic structures \cite{BaSi,DuTa,HaHa,HZ,JiLi,TaXu}.
In parallel,  the extensibility of derivations associated with an abelian extension of Lie algebras and associative algebras
have been studied in \cite{BaSa} and \cite{TX}, respectively.

\noindent {\bf 2}. \emph{To introduce crossed modules and to describe the second cohomology via crossed extensions.}

\noindent Crossed modules of groups introduced in \cite{Wh} are algebraic models of path-connected CW-spaces, whose homotopy groups are trivial in all dimensions greater than $2$.  Crossed modules also play an essential role in algebra because any crossed module determines an element in the third cohomology of groups. This was firstly observed for group cohomology by Mac Lane-Whitehead \cite{MaWh}.
The similar fact that a cohomology class can be considered as an equivalence class of crossed modules is known in Lie algebras, associative algebras, Leibniz algebras, Leibniz $n$-algebras and so on \cite{BaMi,CKL1, Cu,Wa}.

\noindent {\bf 3}. \emph{To construct an eight-term exact cohomology sequence}.

\noindent
The five-term exact sequence in low-dimensional cohomology of algebras with bracket obtained in \cite{Ca} and  our description of  the second cohomology via crossed
extensions leads us to extend this five-term sequence in three new terms, involving the second cohomology vector spaces, similarly to the case of cohomology sequences in groups, Lie algebras, Leibniz algebras and so on \cite{Ca2,CeAz,Ra}.

\subsection*{Organization} After the introductory Section 1, the paper is organized into five more sections.  Section 2 recalls basic definitions on algebras with bracket, construction and some results of their cohomology. In section 2, the extensibility problem for derivations of algebras with bracket associated with an abelian extension is stated and investigated, the Wells map is constructed
(Definition~\ref{Def_Well_map}), which is used to determine the necessary and sufficient condition for the extensibility of a pair of derivations associated to a given abelian extension  (Theorem~\ref{theorem extensible});  the Wells exact sequence connecting various vector spaces of derivations is also obtained (Theorem~\ref{theorem Well seq}, Corollary~\ref{Cor Well seq}). In Section 4, we introduce the notion of crossed module of algebras with bracket (Definition~\ref{def crossed module}) and show the equivalence with internal categories (Theorem~\ref{Theo inter cat}). In Section 5, the crossed modules are used to define crossed extensions  of algebras with bracket. We establish a bijection between the second cohomology and the set of equivalence classes of crossed extensions (Theorem~\ref{T11}). The last Section 6 is devoted to the construction of an eight-term exact sequence in the cohomology of algebras with bracket (Theorem~\ref{Theo 8 term }).

\subsection*{Conventions and notation} Throughout the paper we  fix a ground  field $\mathbb{K}$. All vector spaces and algebras are $\mathbb{K}$-vector spaces and $\mathbb{K}$-algebras, and linear maps are $\mathbb{K}$-linear maps as well. $\Hom$ and $\otimes$ denote $\Hom_\mathbb{K}$ and $\otimes_\mathbb{K}$, respectively.
For the composition of two maps $f$ and $g$, we write either $g\circ f$ or simply $gf$. For the identity map on
a set $X$ we use the notation $\id_{X}$.
For any equivalence relation on a set $X$ we shall write  $\cl(x)$ to denote the equivalence class of an element $x\in X$.

\

\section{Algebras With Bracket} \label{AWB}

\subsection{Basic definitions} \label{Basic}
\begin{definition}[\cite{CP}] An  algebra with bracket, or an \textup{AWB} for short,  is  an associative (not necessarily commutative) algebra ${\A}$
	equipped with a bilinear map (bracket operation) $[-,-] \colon  {\A} \times {\A} \to {\A}$, $(a, b)\mapsto [a,b]$ satisfying the following identity:
	\begin{equation}\label{FE}
		[a  b,c] = [a,c]  b + a  [b,c]
	\end{equation}
	for all $a, b, c \in {\sf A}$.
\end{definition}

\emph{A homomorphism} of \textup{AWB}'s is a homomorphism of associative algebras preserving the bracket operation.
We denote by ${\sf AWB}$ the respective category of \textup{AWB}'s.

\begin{example}\label{ejemplos}\
	\begin{enumerate}
		\item[(i)]  Any vector space $\A$ with the trivial multiplication and bracket, i.e. $ab=0$ and $[a,b]=0$ for all $a,b\in \A$,  is an \textup{AWB}, called \emph{an abelian \textup{AWB}}.
		
		\item[(ii)]  Any Poisson algebra is an \textup{AWB}. In fact, the category ${\sf Poiss}$ of commutative Poisson algebras is a subcategory of ${\sf AWB}$. The inclusion functor $\sf Poiss \hookrightarrow AWB$ has as left adjoint the functor given by
		${\A} \mapsto {\A}_{\sf Poiss}$, where ${\A}_{\sf Poiss}$ is the maximal quotient of ${\A}$, such that the following relations hold: $a  b-b   a \sim 0$, $[a,a]\sim 0$ and $[a,[b,c]]+[b,[c,a]]+[c,[a,b]] \sim 0$.
		\item[(iii)]  Let ${\A}$ be an associative algebra equipped with a linear
		map $D\colon  {\A} \to {\A}$. Then ${\A}$ is an \textup{AWB} concerning the bracket given by
		\[
		[a,b] \coloneqq aD(b) - D(b)a.
		\]
		For $D=0$, one obtains an \textup{AWB}, which has the trivial bracket. The more interesting is the case
		when $D = \id$, then ${\A}$ is an algebra with bracket concerning the usual bracket
		\[
		[a,b]\coloneqq ab - ba.
		\]
		This particular algebra with bracket is called the tautological algebra with bracket associated to an associative algebra ${\A}$.
		Let us recall that this last bracket also defines a Lie algebra structure on the underlying vector space of $\A$.
		
		\item[(iv)] If $({\A}, \prec, \succ)$ is a dendriform algebra (see \cite{Lo}), then ${\A}$ with the product and bracket defined respectively by
		$a b = a \prec b + a \succ b$ and  $[a,b] = a \prec b + a \succ b - b \prec a - b \succ a$ is an \textup{AWB}.
		
		\item[(v)] Let ${\A}$ be an \textup{AWB}, then the vector space ${\A} \otimes {\A}$ can be endowed with the structure of \textup{AWB} in the following two ways:
		\begin{itemize}
			\item[1.] 	\begin{align*}
				(a_1 \otimes a_2)  (b_1 \otimes b_2) & = (a_1b_1) \otimes (a_2b_2), \\
				[a_1 \otimes a_2, b_1 \otimes b_2] & =[a_1,[b_1,b_2]] \otimes a_2 + a_1 \otimes [a_2,[b_1,b_2]].
			\end{align*}
			\item[2.]
			\begin{align*}
				(a_1 \otimes a_2)(b_1 \otimes b_2) & = a_1 \otimes (a_2 (b_1 b_2)), \\
				[a_1 \otimes a_2, b_1 \otimes b_2] & = a_1 \otimes [a_2,b_1  b_2] + [a_1,b_1  b_2] \otimes a_2.
			\end{align*}
		\end{itemize}
		for all $a_1, a_2, b_1, b_2 \in {\A}$.
	\end{enumerate}
\end{example}

Let us recall the following notions from \cite{Ca}.
\emph{A subalgebra} ${\B}$ of an \textup{AWB} ${\A}$ is a vector subspace  which is closed under the product and the bracket operation, that is, ${\B}~{\B} \subseteq
{\B}$ and $[{\B},{\B}] \subseteq {\B}$.
A subalgebra ${\B}$ is said to be a  \emph{right (respectively, left) ideal} if  ${\A}~{\B} \subseteq {\B}$, $[{\A},{\B}] \subseteq {\B}$ (respectively,
${\B}~{\A} \subseteq {\B}$, $[{\B},{\A}] \subseteq {\B}$).
If  ${\B}$ is both left and right ideal, then it is said to be  a \emph{two-sided ideal}. In this case, the quotient
${\A}/{\B}$ is endowed with an \textup{AWB} structure  naturally induced from the operations on ${\A}$.

Let $\A$ be an \textup{AWB} and ${\B}, {\C}$ be two-sided ideals of ${\A}$. The \emph{commutator ideal} of ${\B}$ and ${\C}$ is the two-sided ideal of
${\B}$ and ${\C}$
\[
[[{\B},{\C}]] = \langle \{bc, cb, [b,c], [c,b] \mid b \in {\B}, c \in {\C}
\} \rangle.
\]
Obviously $[[{\B}, {\C}]] \subseteq {\B} \bigcap {\C}$. Observe that  $[[{\B},{\C}]]$ is not a two-sided ideal of
${\A}$ in general, except when ${\B} = {\A}$ or ${\C} = {\A}$. In the particular case $ {\B} = {\C} = {\A}$, one obtains the definition of derived algebra of ${\A}$,
\[
[[{\A},{\A}]]=\langle \{aa', [a,a'] \mid a, a' \in {\A} \} \rangle.
\]
Note that the quotient ${\A}/[[{\A},{\A}]] $ is an abelian \textup{AWB} and will be denoted by $\A_{\ab}$.

The \emph{center} of an \textup{AWB} ${\A}$ is the two-sided ideal
\[
{\Z}({\A}) =  \{a \in {\A} \mid ab
= 0 = ba,[a,b] = 0 = [b,a], {\rm for \ all}\ b \in {\sf A}\}.
\]
Note that an algebra with bracket ${\A}$ is abelian if and only if ${\A} = {\Z}({\A})$.

\subsection{Actions and semi-direct product}

\begin{definition} \label{action def}
	Let ${\A}$ and ${\M}$ be two \textup{AWB}'s. An action of ${\A}$ on ${\M}$ consists of four bilinear maps
	\[
	\begin{array}{llcll}
		{\A} \times {\M} \to {\M}, & (a, m) \mapsto {a \cdot m}, & & {\M} \times {\A}  \to {\M}, &(m, a) \mapsto m \cdot a,\\
		{\A} \times {\M} \to {\M}, & (a, m) \mapsto \{{a, m}\}, & & {\M} \times {\A}  \to {\M}, & (m, a) \mapsto \{m,a\},\\
	\end{array}
	\]
	such that the following conditions hold:
	\begin{equation}\label{equations_action}
		\begin{array}{lcl}
			(a_1 a_2) \cdot m = a_1 \cdot ({a_2} \cdot m), && \{{a_1 \cdot} m , { a_2}\} = {{a_1 \cdot}} \{ m,   a_2 \} + [a_1, a_2] \cdot m,\\
			m \cdot {(a_1 a_2)}  =(m \cdot {a_1}){ \cdot {a_2}} , && \{m{ \cdot {a_1}}, { {a_2}}\} =  \{m, {{a_2}}\}{ \cdot {a_1}} + m{ \cdot [a_1, a_2]},\\
			({{a_1 \cdot}} m ){ \cdot {a_2}}  = {{a_1 \cdot}} (m{ \cdot {a_2}} ) , && \{a_1 a_2, m\} = {a_1 \cdot} \{a_2, m \} + \{a_1,  m\} \cdot {a_2},\\
			(m_1 m_2){ \cdot a} = m_1 (m_2{ \cdot {\ a}}), && [m_1{ \cdot { a}},m_2] = m_1\{ a, m_2\} + [m_1, m_2]{ \cdot a},\\
			{{a \cdot}} (m_1 m_2)  = ({{a \cdot}}  m_1 ) m_2 , && [{{a \cdot}} {m_1}, m_2] =  a \cdot [m_1, m_2] + \{a, m_2\} m_1,\\
			(m_1{ \cdot  a}) m_2  = m_1 ({{a \cdot}} {m_2} ) , && \{m_1 m_2, a\} = m_1\{m_2, a\} + \{m_1, a\} m_2,\\
		\end{array}
	\end{equation}
	for all $a, a_1, a_2 \in {\A}$, $m, m_1, m_2 \in {\M}$. The action is called trivial if all these bilinear maps are trivial.
\end{definition}

Let us remark that if an action of an \textup{AWB} $\A$ on an abelian \textup{AWB} $\M$ is given,
then all six equations in the last three lines of \eqref{equations_action} vanish and we get the definition of a \emph{representation} $\M$ of $\A$ (see \cite{CP}).

\begin{example}\label{action}\
	\begin{enumerate}
		\item[(i)] If $\M$ is a representation of an \textup{AWB} $\A$ thought as an abelian \textup{AWB}, then there is an action of $\A$ on the abelian \textup{AWB} $\M$.
		
		\item[(ii)] If ${\M}$ is a two-sided ideal of an \textup{AWB} ${\A}$, then the operations in ${\A}$ yield an action of ${\A}$ on ${\M}$, that is ${a \cdot}m= am$, $m{\cdot a} = m a$, $\{a,m\}= [a,m]$, $\{m,a\}= [m,a]$, for all $m \in {\M}$, and $a \in {\A}$.
		
		\item[(iii)] If $ 0\to {\M} \stackrel{i}\to {\B} \stackrel{\pi}\to {\A }\to 0$ is a split short exact sequence of algebras with bracket, that is, there exists a homomorphism of algebras with bracket  $s\colon  {\A}\to {\B}$ such that $\pi \circ s = \id_{{\A}}$, then there is an action of  ${\A}$ on ${\M}$, given by:
		\[
		\begin{array}{lcl}
			a \cdot m = i^{-1}\big( s(a) i(m)\big), && m \cdot a = i^{-1}\big( i(m) s(a) \big), \\
			\{a , m\} = i^{-1}\big(\left[s(a), i(m) \right]\big), && \{m, a\} = i^{-1}\big(\left[i(m), s(a)\right]\big),
		\end{array}
		\]
		\noindent for any $a\in {\A}$, $m\in {\M}$.
		
		\item[(iv)] Any homomorphism of \textup{AWB}'s $f \colon  {\A} \to {\M} $  induces an action of ${\A}$ on ${\M}$ in the
		standard way by taking images of elements of ${\A}$ and operations in  ${\M}$.
		
		\item[(v)] If $\mu \colon {\M}\to {\A}$ is a surjective homomorphism of \textup{AWB}'s and the kernel of $\mu$ is contained in the center of $\M$, i.e. $\Ker(\mu) \subseteq {\Z}({\M})$, then there is an action of ${\A}$ on ${\M}$, defined in the standard way, i.e. by choosing pre-images of elements of ${\A}$ and taking operations in ${\M}$.
	\end{enumerate}
\end{example}

\begin{definition}
	Let ${\A}$ and ${\M}$ be \textup{AWB}'s with an action of ${\A}$ on ${\M}$. The semi-direct product of ${\M}$ and ${\A}$, denoted by ${\M} \rtimes {\A}$, is the \textup{AWB} whose  underlying vector space is ${\M} \oplus {\A}$ endowed with the operations
	\begin{align*}
		(m_1, a_1) (m_2, a_2) & = \big(m_1 m_2 + {{a_1 \cdot}} m_2 + m_1{ \cdot a_2}, a_1 a_2 \big), \\
		[(m_1, a_1), (m_2, a_2)] & = \big([m_1, m_2] + \{a_1, m_2\} + \{m_1, a_2\}, [a_1, a_2]\big)
	\end{align*}
	for all $m_1, m_2 \in {\M}$, $a_1, a_2 \in {\A}$.
\end{definition}

Given an action of an \textup{AWB} $\A$ on $\M$, straightforward calculations show that the sequence of \textup{AWB}'s
\begin{equation}
	0 \lra {\M} \stackrel{i}\lra {\M} \rtimes {\A} \stackrel{\pi}\lra {\A} \lra 0
\end{equation}
where $i(m)=(m, 0), \pi(m,a)=a$, is exact. Moreover ${\M}$ is a two-sided ideal of ${\M} \rtimes {\A}$ and this sequence splits by $s \colon  {\A} \to {\M} \rtimes {\A}, s(a) = (0, a)$. Then, as in Example~\ref{action} {(iii)}, the above sequence induces another action of ${\A}$ on $\M$ given by
\begin{align*}
	a \cdot m &= i^{-1}\big((0,a) (m,0)\big), \qquad m \cdot a = i^{-1}\big((m,0) (0,a)\big), \\
	\{a, m \} & = i^{-1}\big[(0,a), (m,0)\big],  \quad \{m, a\} = i^{-1}\big[(m,0), (0,a)\big],
\end{align*}
which actually matches the given one.

\subsection{Cohomology of \textup{AWB}'s} The Quillen cohomology of an \textup{AWB} $\A$ with coefficients in a
representation ${\M}$ of $\A$ is computed as the
cohomology of an explicit cochain complex $K^*(\A,\M)$ in \cite{CP}. Let us recall the main constructions.

Let $\A$ be an \textup{AWB} and $\M$  a representation of $\A$. Since $\M$ is a bimodule over the associative algebra $\A$, we can consider the Hochschild cochain complex $C^*(\A,\M)$  of $\A$ with coefficients in $\M$. Let us recall that $C^n(\A,\M) = \Hom(\A^{\otimes n}, \M) $ and the coboundary map $b^*$ is given by
\begin{align*}
	b^n (f)(a_0,\dots,a_{n}) = &  a_0 \cdot f(a_1,\dots,a_{n})  +
	\underset{\scriptsize{0\leq i\leq n-1}}\sum
	(-1)^{i+1} f(a_0,\dots,a_i a_{i+1},\dots,a_{n})\\
	& + (-1)^{n+1}f(a_0,\dots,a_{n-1})\cdot a_n\;.
\end{align*}

Now we let $\overline{C}^*(\A,\M)$ be the cochain complex defined by
\[
\overline{C}^n(\A,\M)={C}^{n+1}(\A,\M), \quad n\geq 0 \quad \text{and} \quad
\overline{C}^n(\A,\M)=0, \quad n \le 0.
\]

At the same time, the vector space  $\M^e = \Hom(\A,\M)$ has a bimodule structure over $\A$ given by
\[
(a_0\cdot f)(a_1)=a_0\cdot f(a_1), \qquad (f\cdot a_0)(a_1)=f(a_1)\cdot a_0,
\]
for all $a_0,a_1\in \A$ and $f\in \M^e$.

There is a cochain map
\[
\alpha^n\colon  \overline{C}^n(\A,\M) \to \overline{C}^n(\A,\M^e), \quad n\geq 0
\]
defined by
\[
\left( \alpha^0(f)(a_0)\right)(a_1) =\{a_0,f(a_1)\}-f[a_0,a_1]+\{f(a_0),a_1\}
\]
provided $n=0$ and for $n\geq 1$ by
\[
\big(\alpha^n(f)(a_0,\dots, a_n)\big)(a_{n+1})
=\{f(a_0,\dots, a_n), a_{n+1}\}-
\underset{\scriptsize{0\leq i\leq n}}\sum
f\left(a_0, \dots, [a_i,a_{i+1}], \dots , a_{n+1}\right) .
\]

Then the complex $K^*(\A,\M)$ is defined
to be the cone of the cochain map $\alpha$. By definition, the $n$-th \textup{AWB} cohomology $H^{*}_{ \sf AWB}(\A,\M)$ of $\A$ with coefficients in $\M$ is the $(n-1)$-th cohomology of the complex $K^*(\A,\M)$, i.e.   $ H^{n}_{\sf AWB}(\A,\M) = H^{n-1}K^*(\A,\M)$, $n\geq 0$.

\

For future references, we specify below the cochains and coboundary
maps of $K^*(\A,\M)$ in low dimensions:

\noindent $\bullet$ $K^{-1}(\A,\M)$ consists of all linear maps $h\colon \A \to \M$

\noindent $\bullet$  $K^0(\A,\M)$ consists of all pairs $(f,g)$, where $f\colon \A^{\otimes 2}\to \M$
(resp. $g\colon \A\to \M^e$) is a $2$-cochain (resp. $1$-cochain ) in the Hochschild complex $C^*(\A,\M)$ (resp. $C^*(\A,\M^e)$).

\noindent $\bullet$  $K^1(\A,\M)$ consists of all pairs $(\mathsf{f},\mathsf{g})$, where $\mathsf{f}\colon \A^{\otimes 3}\to \M$ 
(resp. $\mathsf{g} \colon \A ^{\otimes 2} \to \M^e$) is a $3$-cochain (resp. $2$-cochain) in the Hochschild complex $C^*(\A,\M)$ (resp. $C^*(\A,\M^e)$);

\noindent $\bullet$  The  coboundary map $\partial^{-1}$ is given by
\begin{align}\label{eq_pa-1}
	&\partial^{-1}(h)(a_0,a_1)= \left( -b^1(h)(a_0,a_1), \ - \alpha^0(h)(a_0)(a_1) \right) \\
	& \quad = \big( -a_0\cdot h(a_1) +h(a_0a_1)-h(a_0)\cdot a_1, \ -\{a_0,h(a_1)\}+h[a_0,a_1]-\{h(a_0),a_1\}  \big). \notag
\end{align}

\noindent $\bullet$  The  coboundary map $\partial^0$ is given by
\[
\partial^0(f,g)= \left( -b^2(f), b^1(g) - \alpha^1(f) \right),
\]
where
\begin{align}\label{eq_0_cocycle_1}
	b^2(f)(a_0,a_1,a_2) = a_0\cdot f(a_1,a_2)-f(a_0a_1, a_2)+f(a_0,a_1a_2)-f(a_0,a_1)\cdot a_2,
\end{align}
and
\begin{align}\label{eq_0_cocycle_2}
	\left(  b^1(g) - \alpha^1(f) \right) (a_0,a_1,a_2) & =  a_0\cdot  \big( g(a_1)(a_2)\big)  + \big( g(a_0)(a_2)\big)\cdot  a_1 -  g(a_0a_1)  (a_2) \\ \notag
	& \quad 	- \Big(\{f(a_0,a_1), a_2\}- f\big( [a_0,a_2],a_1\big) - f\big( a_0, [a_1,a_2]\big) \Big).
\end{align}

\noindent $\bullet$  The coboundary map $\partial^1$ is given by
\[
\partial^1(\mathsf{f},\mathsf{g})= \left( -b^3(\mathsf{f}), b^2(\mathsf{g}) - \alpha^2(\mathsf{f}) \right)  .
\]
Therefore, $1$-cocycles in $K^*(\A,\M)$  are those  pairs $(\mathsf{f},\mathsf{g})\in K^1(\A,\M) $ such that $\mathsf{f}\colon \A^{\otimes 3}\to \M$ is a $2$-cocycle in the Hochschild complex $C^*(\A,\M)$, that is,
\begin{align}\label{eq_1_cocycle_1}
	a_0\cdot \mathsf{f}(a_1,a_2,a_3)-\mathsf{f}(a_0a_1, a_2, a_3)+\mathsf{f}(a_0,a_1a_2, a_3)-\mathsf{f}(a_0,a_1, a_2a_3) + \mathsf{f}(a_0,a_1, a_2)\cdot a_3 =0,
\end{align}
and $\mathsf{g}\colon \A^{\otimes 2} \to \M^e$ is a linear map satisfying the following equality
\begin{align}\label{eq_1_cocycle_2}
	& \{\mathsf{f}(a_0,a_1,a_2), a_3\}- \mathsf{f}\left( [a_0,a_3],a_1,a_2\right)  - \mathsf{f}\left( a_0, [a_1,a_3],a_2\right)  - \mathsf{f}\left( a_0,a_1,[a_2,a_3]\right) \\ \notag
	& \	=  a_0 \cdot \left( \mathsf{g}(a_1,a_2)(a_3)\right)  - \mathsf{g}(a_0a_1,a_2)(a_3)+ \mathsf{g}(a_0, a_1a_2)(a_3) - \left( \mathsf{g}(a_0, a_1)(a_3) \right) \cdot a_2 \, .
\end{align}

\subsection{$H^0_{\AWB}$ and $H^1_{\AWB}$}
Given an \textup{AWB} $\A$ and its representation $\M$,
$H^0_{\AWB}(\A,\M)$ and $H^1_{\AWB}(\A,\M)$ are described in \cite{CP} via \textup{AWB}-derivations and abelian extensions, respectively.
Let us briefly recall the relevant concepts related to these results, which will be useful later.

Let $\A$ be an \textup{AWB} and $\M$  a representation of $\A$.
An \textup{AWB}-\emph{derivation} is a linear map $d\colon \A\to\M$ such that
$d(a_0a_1)=a_0d(a_1)+d(a_0)a_1$ (i.e. $d$ is a derivation in the sense of associative algebras)
and $d[a_0,a_1]=[a_0,d(a_1)]+[d(a_0),a_1]$ for all $a_0, a_1 \in \A$.
Let $\Der_{\AWB}(\A,\M)$ denote the vector space of all \textup{AWB}-derivations. By \cite[Lemma 4.4]{CP} there is an isomorphism of vector spaces
\begin{equation}\label{iso h0}
	H^0_{\sf AWB}(\A,\M)\cong \Der_{\AWB}(\A,\M).
\end{equation}

In the particular case when $\M=\A$, we denote $\Der_{\AWB}(\A,\A)$ by $\Der_{\AWB}(\A)$.
It is easy to check that $d_1d_2 - d_2d_1 \in \Der_{\AWB}(\A)$ if $d_1, d_2 \in \Der_{\AWB}(\A)$,
showing that $\Der_{\AWB}(\A)$ is a Lie subalgebra of the Lie algebra of all derivations  $\Der(\A)$ of the associative algebra $\A$.

\

Any abelian extension of an \textup{AWB} $\A$ by a representation $\M$ of $\A$, that is, a short exact sequence of \textup{AWB}'s
\begin{equation*}
	E\colon   0\lra \M \overset{i}{\lra} \E\overset{p}{\lra} \A\lra 0,
\end{equation*}
which induces the given representation structure on $\M$, gives rise to a $0$-cocycle $(f,g)\in K^0(\A,\M) $ by choosing a linear section
$s\colon \A\to \M$ of $p$, and by defining $f\colon \A^{\otimes 2}\to \M$ and $g\colon \A\to\Hom(\A,\M)$ as follows
\begin{align}\label{eq_1cocycle(f,g)}
	f(a_0, a_1) & =i^{-1} \big( s(a_0)s(a_1) - s(a_0a_1) \big) , \\
	g(a_0)(a_1) & =i^{-1} \big( [s(a_0),s(a_1)] - s[a_0,a_1] \big) \notag
\end{align}
for all $a_0, a_1\in \A$. This gives a well-defined bijection between the set of equivalence classes $\Ext_{\AWB}(\A,\M)$
of such abelian extensions of $\A$ by $\M$ and the first cohomology of $\A$ with coefficients in $\M$ \cite[Lemma 4.6]{CP}, i.e.
\begin{equation}\label{iso ext h1}
	H^1_{\AWB}(\A,\M)\cong \Ext_{\AWB}(\A,\M).
\end{equation}

Let us note that this bijection allows us to endow the set  ${\Ext}(\A,\M)$ with a vector space structure induced from the one of $H_{\AWB}^1(\A,\M)$.
The respective addition in ${\Ext}_{\AWB}(\A,\M)$ is defined by the well-known ``Baer sum'' of extensions and the scalar multiplication
is defined for any $\lambda\in \K$ by $\lambda \cl(E)= \cl{(\lambda E)}$,
where $ \lambda E\colon  0\to \M \xrightarrow{\lambda \cdot i} \E  \xrightarrow{\; p \; }  \A \to 0$.

\section{\textup{AWB}-derivations of abelian extensions}

\subsection{Extensibility of \textup{AWB}-derivations}

In this section, we consider the extensibility problem of \textup{AWB}-derivations associated with an abelian extension.
\begin{definition}
	Let $0\lra \M \overset{i}{\lra} \E\overset{p}{\lra} \A\lra 0$ be an abelian extension of an \textup{AWB} $\A$ by its representation $\M$.
	A pair $(d_{\M},d_{\A})\in \Der_{\AWB}(\M) \times \Der_{\AWB}(\A)$ is called extensible if there is a derivation $d_{\E}\in \Der_{\AWB}(\E) $
	such that $d_{\E} \circ i= i \circ d_{\M}$ and $p \circ d_{\E}= d_{A} \circ p$.
\end{definition}
Let us remark that, here $\M$ is considered as an abelian \textup{AWB} and hence,
a derivation $d_{\M}\in \Der_{\AWB}(\M) $ is just a linear map, and $\Der_{\AWB}(\M) = \End_{\mathbb{K}}(\M)$.

\begin{lemma}\label{lemma_semi-dir}
	Let $\M$ be a representation of an \textup{AWB} $\A$. Consider $\M$ as an abelian \textup{AWB}.
	Let $d_{\A}\in \Der_{\AWB}(\A)$  and $d_{\M}\in \Der_{\AWB}(\M)$. Then $(d_{\M},d_{\A})\in \Der_{\AWB}(\M\rtimes \A)$
	if and only if the following equalities hold for all $a\in \A$ and $m\in \M$
	\begin{align}
		d_{\M}(a\cdot m) & =a\cdot d_{\M}(m)+d_{\A}(a)\cdot m,\label{eq_d1}\\
		d_{\M}(m\cdot a) & =m\cdot d_{\A}(a)+d_{\M}(m)\cdot a,\label{eq_d2} \\
		d_{\M}\{a,m\} & =\{a,d_{\M}(m)\}+\{d_{\A}(a), m\},\label{eq_d3} \\
		d_{\M}\{m, a\} & =\{m, d_{\A}(a)\}+\{d_{\M}(m), a\}.\label{eq_d4}
	\end{align}
\end{lemma}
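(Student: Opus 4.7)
The plan is to verify the derivation property for $(d_{\M},d_{\A})$ on $\M\rtimes\A$ by a direct bilinear expansion, using that $\M$ is abelian so that the product and bracket on $\M\rtimes\A$ simplify drastically: since $m_1 m_2=0$ and $[m_1,m_2]=0$, we have
\[
(m_1,a_1)(m_2,a_2)=(a_1\cdot m_2+m_1\cdot a_2,\ a_1 a_2),
\]
\[
[(m_1,a_1),(m_2,a_2)]=(\{a_1,m_2\}+\{m_1,a_2\},\ [a_1,a_2]).
\]

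First I would apply the candidate endomorphism $(d_{\M},d_{\A})\colon (m,a)\mapsto (d_{\M}(m),d_{\A}(a))$ to the product $(m_1,a_1)(m_2,a_2)$ and compare it with the Leibniz-style expansion $(d_{\M}(m_1),d_{\A}(a_1))(m_2,a_2)+(m_1,a_1)(d_{\M}(m_2),d_{\A}(a_2))$. Projecting onto the $\A$-coordinate reduces to the associative Leibniz rule for $d_{\A}$, which holds by hypothesis. Projecting onto the $\M$-coordinate yields the single identity
\[
d_{\M}(a_1\cdot m_2+m_1\cdot a_2)=a_1\cdot d_{\M}(m_2)+d_{\A}(a_1)\cdot m_2+m_1\cdot d_{\A}(a_2)+d_{\M}(m_1)\cdot a_2 .
\]
Specializing to $m_1=0$ gives \eqref{eq_d1}, and to $a_2=0$ gives \eqref{eq_d2}; conversely, adding these two identities recovers the combined equation, so the associative Leibniz rule for $(d_{\M},d_{\A})$ on $\M\rtimes\A$ is equivalent to \eqref{eq_d1}+\eqref{eq_d2}. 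Repeating the exact same argument with the bracket in place of the product produces \eqref{eq_d3} and \eqref{eq_d4} from the $\M$-component, while the $\A$-component reduces to the bracket Leibniz rule for $d_{\A}$.

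This argument therefore establishes both directions simultaneously: the "only if" direction comes from specializing the assumed derivation identity to pure elements $(m,0)$ and $(0,a)$ (where the $\M$-coordinate isolates each of \eqref{eq_d1}--\eqref{eq_d4}), and the "if" direction follows by bilinearity, reassembling the four identities together with the Leibniz rules for $d_{\A}$ into the full derivation identities on $\M\rtimes\A$. There is essentially no obstacle beyond careful bookkeeping; the only mild point to keep in mind is that the hypothesis $d_{\M}\in\Der_{\AWB}(\M)$ is trivially satisfied (as noted just before the lemma, this is merely $\K$-linearity since $\M$ is abelian), so the content of the lemma is purely the compatibility between $d_{\A}$, $d_{\M}$, and the four action maps.
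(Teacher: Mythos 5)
Your proposal is correct and is precisely the ``direct calculation'' that the paper's proof leaves to the reader: expand the derivation identity for $(m,a)\mapsto(d_{\M}(m),d_{\A}(a))$ on $\M\rtimes\A$, use that $\M$ abelian kills the $m_1m_2$ and $[m_1,m_2]$ terms, and observe that the $\A$-component is the Leibniz rule for $d_{\A}$ while the $\M$-component is exactly the sum of \eqref{eq_d1}--\eqref{eq_d4}. (Only a cosmetic slip: to isolate \eqref{eq_d2} you should set $a_1=0$ rather than $a_2=0$, i.e.\ evaluate on $(m,0)(0,a)$, as your own description via pure elements already indicates.)
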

\begin{proof}
	The proof requires only direct calculations using the definition of \textup{AWB}-derivations.
\end{proof}

For any \textup{AWB} $\A$ and a representation $\M$ of $\A$, let us denote by
\[
\D_{\AWB}(\M, \A) =  \big\{(d_{\M},d_{\A})\in \Der_{\AWB}(\M) \times \Der_{\AWB}(\A) \mid d_{\M} \ \text{and} \ d_{\A} \
\text{satisfy equations} \ \eqref{eq_d1}-\eqref{eq_d4}\big\}.
\]
One readily checks that
\[
(d_{\M}d'_{\M}-d'_{\M}d_{\M}, d_{\A}d'_{\A}-d'_{\A}d_{\A})\in \D_{{\AWB}}({\M}, {\A}) \ \text{if} \ (d_{{\M}}, d_{{\A}}), (d'_{\M}, d'_{\A}) \in \D_{\AWB}(\M, \A),
\]
which implies that  $\D_{\AWB}(\M, \A)$ is a Lie subalgebra of $\Der_{\AWB}(\M\rtimes \A)$.

Now we define a map
\begin{equation}\label{def_theta}
	\theta \colon  {\D}_{{\AWB}}({\M}, {\A})\times K^0({\A}, {\M}) \lra K^0({\A}, {\M})
\end{equation}
as follows. For any $(d_{\M},d_{\A})\in \D_{\AWB}(\M, \A)$ and any $0$-cochain $(f,g)\in K^0(\A, \M)$ we set
\[
\theta \big( (d_{\M},d_{\A}),(f,g)\big)=(f_{\theta}, g_{\theta}),
\]
where $f_{\theta}\colon \A^{\otimes 2} \to \M$ and $g_\theta\colon \A \to \Hom(\A, \M)$ are given by
\begin{align}\label{eq_def (f, g)theta}
	f_{\theta}(a_0,a_1) & =d_{\M}(f(a_0,a_1))- f(d_{\A}(a_0),a_1) - f(a_0,d_{\A}(a_1)) ,\\
	g_{\theta}(a_0)(a_1) & =d_{\M}(g(a_0)(a_1))- g(d_{\A}(a_0))(a_1) - g(a_0)(d_{\A}(a_1)).\notag
\end{align}

\begin{lemma}\label{Lemma theta cocycle}
	Let us fix $(d_{\M},d_{\A})\in \D_{\AWB}(\M, \A)$. Then we have:
	\begin{itemize}
		\item[(i)] If $(f,g)\in K^0(\A, \M)$ is a $0$-cocycle, then $(f_{\theta}, g_{\theta})$ is a $0$-cocycle as well.
		\item[(ii)] If $(f,g)$ and $(f',g')$ are two cohomologous cocycles, then  $(f_{\theta}, g_{\theta})$ and $(f'_{\theta}, g'_{\theta})$ are cohomologous as well.
	\end{itemize}
\end{lemma}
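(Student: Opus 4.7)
The plan is to verify both parts by direct computation, exploiting the derivation properties of $d_{\A}$ on associative products and brackets together with the compatibility equations \eqref{eq_d1}--\eqref{eq_d4}. The underlying phenomenon is that the twist $\theta$ commutes with the coboundaries of $K^{*}(\A,\M)$ in a Leibniz-like fashion, so it sends cocycles to cocycles and coboundaries to coboundaries.

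For (i), recall that $\partial^{0}(f_{\theta},g_{\theta})=(-b^{2}(f_{\theta}),\, b^{1}(g_{\theta})-\alpha^{1}(f_{\theta}))$. I would first establish the Leibniz-type identity
\[
b^{2}(f_{\theta})(a_{0},a_{1},a_{2})=d_{\M}\bigl(b^{2}(f)(a_{0},a_{1},a_{2})\bigr)-\sum_{i=0}^{2}b^{2}(f)(a_{0},\ldots,d_{\A}a_{i},\ldots,a_{2})
\]
by expanding the left-hand side via \eqref{eq_def (f, g)theta}, rewriting each $d_{\A}(a_{i}a_{i+1})$ by the associative Leibniz rule, and applying \eqref{eq_d1}--\eqref{eq_d2} to pull $d_{\M}$ through the outermost $a_{i}$-actions; since $b^{2}(f)=0$ by hypothesis, this gives $b^{2}(f_{\theta})=0$. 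The same procedure applied to $(b^{1}(g_{\theta})-\alpha^{1}(f_{\theta}))(a_{0},a_{1})(a_{2})$ yields the analogous identity, and the hypothesis $b^{1}(g)-\alpha^{1}(f)=0$ then gives the vanishing. The additional ingredients for the $\alpha^{1}$-part are the bracket derivation property of $d_{\A}$ together with \eqref{eq_d3}--\eqref{eq_d4}, which are needed to handle the terms $\{f_{\theta}(-,-),-\}$ and $f_{\theta}([-,-],-)$ appearing in formula \eqref{eq_0_cocycle_2}; in the end every contribution either reassembles into $d_{\M}\circ(b^{1}(g)-\alpha^{1}(f))$ or into $-(b^{1}(g)-\alpha^{1}(f))$ evaluated with one $a_{i}$ replaced by $d_{\A}a_{i}$.

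For (ii), given $h\colon\A\to\M$ with $(f,g)-(f',g')=\partial^{-1}(h)$, I would introduce the twisted cochain $h_{\theta}\colon\A\to\M$ defined by $h_{\theta}(a)=d_{\M}(h(a))-h(d_{\A}(a))$, and verify that $(f_{\theta},g_{\theta})-(f'_{\theta},g'_{\theta})=\partial^{-1}(h_{\theta})$. By \eqref{eq_pa-1}, this reduces to the two identities $b^{1}(h_{\theta})=\bigl(b^{1}(h)\bigr)_{\theta}$ and $\alpha^{0}(h_{\theta})=\bigl(\alpha^{0}(h)\bigr)_{\theta}$, both of which follow by the same bookkeeping as in (i): the associative Leibniz rule and \eqref{eq_d1}--\eqref{eq_d2} for the first, and the bracket Leibniz rule with \eqref{eq_d3}--\eqref{eq_d4} for the second. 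The main obstacle I foresee is purely organizational: the $\alpha^{1}(f_{\theta})$ computation spawns many terms, each producing three $d_{\A}$-derivative summands, and meticulous use of all four compatibility equations is required for their cancellation. Once the pattern ``$\theta$ commutes with differentials modulo $\theta$-twists of lower cochains'' is extracted, both parts follow uniformly.
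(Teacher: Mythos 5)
Your proposal is correct and follows essentially the same route as the paper: part (i) is verified by expanding $f_{\theta}$ and $g_{\theta}$ term by term, using the derivation property of $d_{\A}$ together with \eqref{eq_d1}--\eqref{eq_d4} to reassemble everything into the cocycle conditions for $(f,g)$, and part (ii) uses exactly the paper's coboundary witness, namely the map $d_{\M}h-hd_{\A}$ (your $h_{\theta}$). The only difference is presentational: you package the computation as the identity ``$\theta$ commutes with the differential modulo $\theta$-twists of lower cochains,'' whereas the paper carries out the cancellation directly, but this is the same calculation.
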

\begin{proof}
	(i) We need to check that $(f_{\theta}, g_{\theta})$ satisfies
	\begin{align}
		& a_0\cdot f_{\theta}(a_1,a_2)-f_{\theta}(a_0a_1, a_2)+f_{\theta}(a_0,a_1a_2)-f_{\theta}(a_0,a_1)\cdot a_2=0, \label{eq_f_theta}\\
		& a_0 \cdot \big( g_{\theta}(a_1)(a_2)\big)  + \big( g_{\theta}(a_0)(a_2)\big) \cdot a_1 -  g_{\theta}(a_0a_1)  (a_2) \label{eq_g_theta} \\
		& \qquad = \{f_{\theta}(a_0,a_1), a_2\}- f_{\theta}\big( [a_0,a_2],a_1\big) - f_{\theta}\big( a_0, [a_1,a_2]\big), \notag
	\end{align}
	for all $a_0, a_1, a_2\in \A$, whenever the same conditions hold for $(f,g)$.
	In effect, to check \eqref{eq_f_theta} we just use equations \eqref{eq_d1}, \eqref{eq_d2} and get
	\begin{align*}
		a_0\cdot f_{\theta}(a_1,a_2) & = d_{\M}(a_0\cdot f(a_1,a_2))-d_{\A}(a_0)\cdot f(a_1,a_2)-a_0\cdot f(d_{\A}(a_1),a_2) - a_0\cdot f(a_1, d_{\A}(a_2)),\\
		-f_{\theta}(a_0a_1, a_2) & = - d_{\M}(f(a_0a_1, a_2)) + f(a_0d_{A}(a_1), a_2) + f(d_{A}(a_0)a_1, a_2) + f(a_0a_1, d_{A}(a_2)),\\
		f_{\theta}(a_0,a_1 a_2) & =  d_{\M}(f(a_0,a_1a_2)) - f(d_{\A}(a_0), a_1a_2) - f(a_0, a_1d_{\A}(a_2)) - \ \ f(a_0, d_{\A}(a_1)a_2),\\
		-f_{\theta}(a_0,a_1)\cdot  a_2 & = -d_{M}(f(a_0,a_1)\cdot a_2) + f(a_0,a_1)\cdot d_{\A}(a_2)+ f(d_{\A}(a_0),a_1)\cdot a_2 + f(a_0,d_{\A}(a_2))\cdot a_2,
	\end{align*}
	and obviously the sum of all these expressions is equal to $0$. In the same way, by using equations \eqref{eq_d1}--\eqref{eq_d4} one checks immediately that \eqref{eq_g_theta} also holds.
	
	(ii) If $(f,g)$ and $(f', g')$ are cohomologous cocycles, then  there is a linear map $h:\A\to \M$ such that $(f-f', g-g') = \pa^{-1}(h)$, by \eqref{eq_pa-1} we get
	\begin{align*}
		(f-f')(a_0, a_1)  & = -a_0\cdot h(a_1) +h(a_0a_1)-h(a_0)\cdot a_1, \\
		(g-g')(a_0)(a_1)  & = -\{a_0,h(a_1)\}+h[a_0,a_1]-\{h(a_0),a_1\} .
	\end{align*}
	Then using only the definition of $(f_{\theta}, g_{\theta})$ in \eqref{eq_def (f, g)theta}, straightforward computations show that the linear map
	$d_{\M}h-hd_{\A}\colon \A \to \M$ satisfies the condition
	\begin{align*}
		(f_{\theta}-f'_{\theta}, g_{\theta}-g'_{\theta}) = \pa^{-1}(d_{\M}h-hd_{\A}).
	\end{align*}
	This completes the proof.
\end{proof}

\begin{remark}
	As a consequence of Lemma~\ref{Lemma theta cocycle}, note that the map $\theta$ in \eqref{def_theta} induces a bilinear map (denoted again by $\theta$)
	\begin{equation*}
		\theta \colon  {\D}_{{\AWB}}({\M}, {\A})\times H^1_{\AWB}(\A,\M) \lra H^1_{\AWB}(\A,\M).
	\end{equation*}
	Moreover, it is routine to check that $\theta$ defines a module structure on $H^1_{\AWB}(\A,\M)$ over the Lie algebra ${\D}_{\AWB}(\M, \A)$.
	
\end{remark}

Using Lemma~\ref{Lemma theta cocycle}, we state the following definition.
\begin{definition}\label{Def_Well_map}
	Let $E \colon  0\lra \M \overset{i}{\lra} \E\overset{p}{\lra} \A\lra 0$ be an abelian extension of an \textup{AWB} $\A$ by its representation $\M$ and $(f,g)\in K^0(\A,\M)$  the corresponding $0$-cocycle as in \eqref{eq_1cocycle(f,g)}. The map
	\[
	\omega \colon  {\D}_{\AWB}(\M, \A) \lra H^1_{\AWB}(\A,\M), \quad  \omega (d_{\M}, d_{\A})=\theta \big( (d_{\M}, d_{\A}), \cl(f,g) \big)=
	\cl( f_{\theta}, g_{\theta})
	\]
	is called the Wells map associated to the given abelian extension ${E}$.
\end{definition}

\begin{remark}\label{remark w=0}
	Note that if the abelian extension $E$ is split, that is, there is a homomorphism of \textup{AWB}'s $s\colon \A \to \E $ such that $p\circ s =\id _{\A}$, then $(f_{\theta},g_{\theta})=(f,g)=(0,0)$ and so $\omega$ is the trivial map.
\end{remark}

\begin{theorem}\label{theorem extensible}
	Let	$E \colon  0\lra \M \overset{i}{\lra} \E\overset{p}{\lra} \A\lra 0$ be an abelian extension of an \textup{AWB} $\A$ by its representation $\M$. A pair
	$(d_{\M},d_{\A})\in \Der_{\AWB}(\M) \times \Der_{\AWB}(\A)$ is extensible if and only if $(d_{\M},d_{\A})\in \D_{\AWB}(\M, \A)$ and $w(d_{\M},d_{\A})=0$.
\end{theorem}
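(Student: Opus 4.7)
The plan is to work with a fixed linear section $s\colon\A\to\E$ of $p$, so that $\E=i(\M)\oplus s(\A)$ as vector spaces. Two structural facts are used throughout: $i(\M)\cdot i(\M)=0=[i(\M),i(\M)]$ inside $\E$, because $E$ is an abelian extension; and the representation of $\A$ on $\M$ is recovered via $s(a)i(m)=i(a\cdot m)$, $i(m)s(a)=i(m\cdot a)$, $[s(a),i(m)]=i\{a,m\}$, $[i(m),s(a)]=i\{m,a\}$, as in Example~\ref{action}(iii).

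\emph{Necessity.} Assume $d_{\E}\in\Der_{\AWB}(\E)$ extends $(d_{\M},d_{\A})$. Since $p(d_{\E}(s(a))-s(d_{\A}(a)))=0$, there is a unique linear map $\lambda\colon\A\to\M$ with $d_{\E}(s(a))=s(d_{\A}(a))+i(\lambda(a))$. Applying $d_{\E}$ to $s(a)i(m)=i(a\cdot m)$ and invoking the derivation rule together with $i(\M)^2=0$ (which annihilates the $i(\lambda(a))i(m)$ term) produces exactly equation \eqref{eq_d1}; the remaining identities \eqref{eq_d2}--\eqref{eq_d4} fall out of the same computation applied to $i(m)s(a)$, $[s(a),i(m)]$ and $[i(m),s(a)]$. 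Hence $(d_{\M},d_{\A})\in\D_{\AWB}(\M,\A)$. To show $\omega(d_{\M},d_{\A})=0$, apply $d_{\E}$ to $i(f(a_0,a_1))=s(a_0)s(a_1)-s(a_0a_1)$ and to $i(g(a_0)(a_1))=[s(a_0),s(a_1)]-s[a_0,a_1]$, expand via the Leibniz rule, and cancel the $s(\A)$-parts using that $d_{\A}$ is itself a derivation. A direct computation yields
\begin{align*}
f_{\theta}(a_0,a_1)&=\lambda(a_0)\cdot a_1+a_0\cdot\lambda(a_1)-\lambda(a_0a_1),\\
g_{\theta}(a_0)(a_1)&=\{\lambda(a_0),a_1\}+\{a_0,\lambda(a_1)\}-\lambda[a_0,a_1],
\end{align*}
which by \eqref{eq_pa-1} is exactly $\pa^{-1}(-\lambda)$, so its cohomology class vanishes.

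\emph{Sufficiency.} Conversely, assume $(d_{\M},d_{\A})\in\D_{\AWB}(\M,\A)$ and $\omega(d_{\M},d_{\A})=0$; the latter furnishes a linear map $\lambda\colon\A\to\M$ satisfying the two equations above. Using $\E=i(\M)\oplus s(\A)$, define
\[ d_{\E}(i(m)+s(a))=i(d_{\M}(m)+\lambda(a))+s(d_{\A}(a)). \]
The relations $d_{\E}\circ i=i\circ d_{\M}$ and $p\circ d_{\E}=d_{\A}\circ p$ are immediate. It remains to verify that $d_{\E}$ is an \textup{AWB}-derivation, which by bilinearity reduces to checking the Leibniz rule for each operation on the four generator pairs $(i(m),i(m'))$, $(i(m),s(a))$, $(s(a),i(m))$ and $(s(a_0),s(a_1))$. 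The first case is trivial because $i(\M)$ is square-zero for both operations; the two mixed cases reduce verbatim to \eqref{eq_d1}--\eqref{eq_d4}; and the case $(s(a_0),s(a_1))$, after using $s(a_0)s(a_1)=i(f(a_0,a_1))+s(a_0a_1)$ and $[s(a_0),s(a_1)]=i(g(a_0)(a_1))+s[a_0,a_1]$, reduces precisely to the two defining equations imposed on $\lambda$. The main obstacle is purely bookkeeping: the four compatibility identities \eqref{eq_d1}--\eqref{eq_d4} and the two components of $\pa^{-1}$ must be matched consistently, and one must track the sign of $\lambda$ when comparing the formulas for $(f_{\theta},g_{\theta})$ against the explicit coboundary \eqref{eq_pa-1}.
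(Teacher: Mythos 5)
Your proposal is correct and follows essentially the same route as the paper's proof: fix a linear section $s$, encode the failure of $d_{\E}$ to commute with $s$ in a linear map $\A\to\M$ (your $\lambda$ is the paper's $-h$), derive \eqref{eq_d1}--\eqref{eq_d4} from the Leibniz rule on mixed products and the identity $(f_{\theta},g_{\theta})=\pa^{-1}(-\lambda)$ from the rule on $s(a_0)s(a_1)$ and $[s(a_0),s(a_1)]$, and conversely define $d_{\E}(i(m)+s(a))=i(d_{\M}(m)+\lambda(a))+s(d_{\A}(a))$ and verify the derivation property on generator pairs. The computations you sketch, including the sign bookkeeping for $\lambda$ versus the coboundary formula \eqref{eq_pa-1}, check out.
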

\begin{proof}
	Let $s\colon \A\to \E$ be a linear section of $p$. Then any element of $\E$ has the form $i(m)+s(a)$,  where $m\in \M$ and $a\in \A$. We write $m$ instead of $i(m)\in \E$ to simplify notations.
	
	If $(d_{\M},d_{\A})\in \D_{\AWB}(\M, \A)$ and $w(d_{\M},d_{\A})=\theta \big( (d_{\A}, d_{\M}), \cl(f,g) \big)=
	\cl( f_{\theta}, g_{\theta}) = 0 $, there exists a linear map $h\colon \A\to \M$ such that  $( f_{\theta}, g_{\theta})= \pa^{-1}(h)$.
	By using \eqref{eq_pa-1} and \eqref{eq_def (f, g)theta} we get immediately:
	\begin{align}
		d_{\M}(f(a_0,a_1))- f(d_{\A}(a_0),a_1) - f(a_0,d_{\A}(a_1)) & = -a_0\cdot h(a_1) +h(a_0a_1)-h(a_0)\cdot a_1, \label{eq_f and h}\\
		d_{\M}(g(a_0)(a_1))- g(d_{\A}(a_0))(a_1) - g(a_0)(d_{\A}(a_1))  & =-\{a_0,h(a_1)\}+h[a_0,a_1]-\{h(a_0),a_1\}.\label{eq_g and h}
	\end{align}
	We define a map $d_{\E}\colon \E \to \E$ by
	\begin{equation}\label{def_d_E}
		d_{\E}(m+s(a))=d_{\M}(m)-h(a) +s(d_{\A}(a)).
	\end{equation}
	Obviously $d_{\E}$ is a linear map and
	\begin{align*}
		d_{\E} (i(m)) & = i (d_{\M}(m)),\\
		p(d_{\E}(m+s(a))) & =p (d_{\M}(m))-p(h(a)) +p(s (d_{\A}(a))) = d_{\A}(a) = d_{\A}(p(m+s(a))).
	\end{align*}
	To show that $d_{\E}$ is an \textup{AWB}-derivation, it suffices to check the following identities
	\begin{align}
		d_{\E}(s(a)m) & =d_{\E}(s(a))m+s(a)d_{\E}(m), \label{eq_d_E1}\\
		d_{\E}(ms(a)) & =d_{\E}(m)s(a)+md_{\E}(s(a)), \label{eq_d_E2}\\
		d_{\E}(s(a_0)s(a_1)) & =d_{\E}(s(a_0))s(a_1)+s(a_0)d_{\E}(s(a_1)), \label{eq_d_E3}\\
		d_{\E}[s(a),m] & =[d_{\E}(s(a)), m]+[s(a),d_{\E}(m)], \label{eq_d_E4} \\
		d_{\E}[m,s(a)] & =[d_{\E}(m),s(a)]+[m,d_{\E}(s(a))], \label{eq_d_E5}\\
		d_{\E}[s(a_0), s(a_1)] & =[d_{\E}(s(a_0)),s(a_1)]+[s(a_0),d_{\E}(s(a_1))]. \label{eq_d_E6}
	\end{align}
	The identities \eqref{eq_d_E1}, \eqref{eq_d_E2}, \eqref{eq_d_E4} and \eqref{eq_d_E5} easily follow from \eqref{eq_d1}, \eqref{eq_d2}, \eqref{eq_d3} and \eqref{eq_d4}, respectively.
	Justifications of \eqref{eq_d_E3} and \eqref{eq_d_E6}  are very similar and use \eqref{eq_f and h} and \eqref{eq_g and h}. We present only the calculations confirming \eqref{eq_d_E6}:
	\begin{align*}
		d_{\E}[s(a_0),s(a_1)] & \overset{\eqref{eq_1cocycle(f,g)}}{=} d_{\E} \big(g(a_0)(a_1)+ s[a_0,a_1]\big) \overset{\eqref{def_d_E}}{=} d_{\M} \big(g(a_0)(a_1)\big) - h[a_0,a_1]+ sd_{\A}[a_0,a_1]\\
		& \ \, = d_{\M} \big(g(a_0)(a_1)\big) - h[a_0,a_1]+ s[d_{\A}(a_0),a_1]+s[a_0,d_{\A}(a_1)]\\
		&\overset{\eqref{eq_1cocycle(f,g)}}{=} d_{\M} \big(g(a_0)(a_1)\big) - h[a_0,a_1] + [sd_{\A}(a_0),s(a_1)] - g(d_{\A}(a_0))(a_1) \\
		&   \qquad \qquad \qquad \qquad \ \ +[s(a_0),sd_{\A}(a_1)] - g(a_0)(d_{\A}(a_1))\\
		&\overset{\eqref{def_d_E}}{=} d_{\M} \big(g(a_0)(a_1)\big) - g(d_{\A}(a_0))(a_1)  - g(a_0)(d_{\A}(a_1)) - h[a_0,a_1] \\
		& \qquad  +  [d_{\E}(s(a_0)),s(a_1)] + [h(a_0), s(a_1)]+[s(a_0),d_{\E}(s(a_1))] + [s(a_0), h(a_1)]\\
		&\overset{\eqref{eq_g and h}}{=} -[a_0,h(a_1)]+h[a_0,a_1]-[h(a_0),a_1] - h[a_0,a_1]  +  [d_{\E}(s(a_0)),s(a_1)] \\
		& \qquad \qquad \qquad \qquad \ \ + [h(a_0),a_1]  +[s(a_0),d_{\E}(s(a_1))]+[a_0, h(a_1)]\\
		& \ =[d_{\E}(s(a_0)),s(a_1)]+[s(a_0),d_{\E}(s(a_1))].
	\end{align*}
	
	Conversely, suppose that $(d_{\M},d_{\A})\in \Der_{\AWB}(\M) \times \Der_{\AWB}(\A)$ is extensible,  i.e.  there exists $d_{\E}\in \Der_{\AWB}(\E) $ such that $d_{\E} \circ i= i \circ d_{\M}$ and $p \circ d_{\E}= d_{\A} \circ p$, then $d_{\M}$ is the restriction of $d_{\E}$ on $\M$, $d_{\M}= {d_{\E}}_{\mkern 1mu \vrule height 2ex\mkern2mu {\M}}$. Fix again a linear section $s\colon \A\to \E$ of $p$. Since $s(d_{\A}(a))- d_{\E}(s(a))\in \Ker p$ for all $a\in \A$, we have a linear map $h\colon  \A\to \M$ defined by
	\begin{equation}\label{coboundary h}
		h (a)=s(d_{\A}(a))- d_{\E}(s(a)).
	\end{equation}
	Now we claim that the pair $(d_{\M},d_{\A})$ satisfies the conditions \eqref{eq_d1}--\eqref{eq_d4}. For instance, we only present the proof for \eqref{eq_d3}:
	\begin{align*}
		d_{\M}\{a,m\}&=d_{\E}[s(a),m] = [d_{\E}(s(a)), m]+[s(a), d_{\E}(m)] \\
		& \! \! \! \overset{\eqref{coboundary h}}{=} [s (d_{\A}(a))- h (a), m] +[a, d_{\M}(m)]\\
		& =  [s (d_{\A}(a)), m] +\{a, d_{\M}(m) \}	=  \{ d_{\A}(a), m\} +\{a, d_{\M}(m)\}.
	\end{align*}
	The justifications of \eqref{eq_d1}, \eqref{eq_d2} and \eqref{eq_d4} are very similar. Hence, $(d_{\M},d_{\A})\in \D_{\AWB}(\M, \A)$.
	
	Finally, we show that
	$( f_{\theta}, g_{\theta})= \pa^{-1}(h)$, where $h$ is defined in \eqref{coboundary h} and  $w(d_{\M},d_{\A})=\theta \big( (d_{\A}, d_{\M}), \cl(f,g) \big)= \cl( f_{\theta}, g_{\theta})$. In effect, for any $a_0, a_1\in \A$ we have
	\begin{align}\label{final 1}
		d_{\E}\big(s(a_0)s(a_1) \big) &\overset{\eqref{eq_1cocycle(f,g)}}{=} d_{\E}\big(s(a_0a_1)+f(a_0,a_1) \big) \overset{\eqref{coboundary h}}{=} s(d_{\A}(a_0a_1))-h(a_0a_1)+d_{\M}\big(f(a_0,a_1)\big)  \\
		& \ 	= s\big(d_{\A}(a_0)a_1\big)+s\big(a_0d_{\A}(a_1)\big)-h(a_0a_1)+d_{\M}\big(f(a_0,a_1)\big) .\notag
	\end{align}
	On the other hand, since $d_{\E}$ is an \textup{AWB}-derivation, we get
	\begin{align}\label{final 2}
		d_{\E}\big(s(a_0)s(a_1) \big) &= d_{\E}\big(s(a_0)\big)s(a_1)+s(a_0)d_{\E}\big(s(a_1\big) \big)  \\
		& \!\! \! \overset{\eqref{coboundary h}}{=} 	\big( s(d_{\A}(a_0)) - h(a_0)\big) s(a_1) + s(a_0)\big(s(d_{\A}(a_1)) -h(a_1)\big)	\notag \\
		& = s\big(d_{\A}(a_0)\big) s(a_1) - h(a_0)s(a_1) + s(a_0)s\big(d_{\A}(a_1) \big) -s(a_0) h(a_1) \notag \\
		& = s\big(d_{\A}(a_0)\big) s(a_1) - h(a_0)\cdot a_1 + s(a_0)s\big(d_{\A}(a_1) \big) -a_0\cdot h(a_1) \notag.
	\end{align}
	Comparing the last lines in \eqref{final 1} and \eqref{final 2}, and taking in mind that
	\begin{align*}
		s\big(d_{\A}(a_0)\big) s(a_1) - s\big(d_{\A}(a_0)a_1\big) & = f(d_{\A}(a_0), a_1),\\
		s(a_0)s\big(d_{\A}(a_1) \big) - s\big(a_0d_{\A}(a_1)\big)  & = f(a_0, d_{\A}(a_1)),
	\end{align*}
	it is readily seen that
	\begin{align*}
		f_{\theta}(a_0,a_1)&=d_{\M}(f(a_0,a_1))- f(d_{\A}(a_0),a_1) - f(a_0,d_{\A}(a_1)) \\
		&= -a_0\cdot h(a_1) +h(a_0a_1)-h(a_0)\cdot a_1,
	\end{align*}
	One can repeat similar  computations to show that
	\begin{align*}
		g_{\theta}(a_0,a_1)&=   d_{\M}(g(a_0)(a_1))- g(d_{\A}(a_0))(a_1) - g(a_0)(d_{\A}(a_1)) \\
		&=-\{a_0,h(a_1)\}+h[a_0,a_1]-\{h(a_0),a_1\}.
	\end{align*}
	Hence $(f_{\theta}, g_{\theta} )= \pa^{-1}(h)$ and the proof is completed.
\end{proof}

\subsection{The Wells sequence for \textup{AWB}-derivations}
We continue using the previous subsection's notation. Given an abelian extension $E \colon  0\lra \M \overset{i}{\lra} \E\overset{p}{\lra} \A\lra 0$ of an \textup{AWB} $\A$ by its representation $\M$, we fix a linear section $s$ of $p$ and denote
\[
\Der_{\AWB}(\E|\M) = \{ d_{\E}\in \Der_{\AWB}(\E) \mid d_{\E}(\M)\subseteq \M \}.
\]
It is obviously a Lie subalgebra of $\Der_{\AWB}(\E)$. One readily checks that any element $d_{\E}\in \Der_{\AWB}(\E|\A)$ defines two \textup{AWB}-derivations
\[
d_{\E\mid\M}= {d_{\E}}_{\mkern 1mu \vrule height 2ex\mkern2mu {\M}}\in \Der_{\AWB}(\M)  \quad \text{and} \quad d^{\E}_{\A}=p\circ d_{\E} \circ s \in \Der_{\AWB}(\A).
\]
Moreover, since $p(d_{\E}(m))=0$ for any $m\in \M$, it follows that $d^{\E}_{\A}$ does not depend on the choice of the section $s$. Thus, we get a linear map
\[
\kappa \colon  \Der_{\AWB}(\E |\M) \lra \Der_{\AWB}(\M)\times \Der_{\AWB}(\A), \quad \kappa(d_{\E})=(d_{\E\mid\M}, d^{\E}_{\A}).
\]
It is easy to see that  $i \circ d_{\E\mid\M} = d_{\E} \circ i $ and $d^{\E}_{\A}  \circ  p =  p\circ d_{\E}$, that is,  $(d_{\E\mid\M}, d^{\E}_{\A})$ is an extensible pair in  $\Der_{\AWB}(\M)\times \Der_{\AWB}(\A)$. Then by Theorem~\ref{theorem extensible} we have that
$(d_{\E\mid\M}, d^{\E}_{\A})\in \D_{\AWB}(\M, \A)$ and $w(d_{\E\mid\M}, d^{\E}_{\A})=0$. Moreover, we have the following result

\begin{lemma}\label{lemma image kappa} With the above notations, the following assertions hold:
	\begin{itemize}
		\item[(i)] $\Im(\kappa)\subseteq  \D_{\AWB}(\M, \A)$;
		\item[(ii)] $\kappa \colon  \Der_{\AWB}(\E |\M) \to \D_{\AWB}(\M, \A) $ is a homomorphism of Lie algebras;
		\item[(iii)] $\Im(\kappa) = \Ker (\omega)$.
	\end{itemize}
\end{lemma}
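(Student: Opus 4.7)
The plan is to deduce all three parts from Theorem~\ref{theorem extensible} with only one genuine computation, so the proof is mostly bookkeeping. For (i), the inclusion is essentially already recorded in the paragraph immediately preceding the lemma: given $d_\E \in \Der_{\AWB}(\E|\M)$, the pair $(d_{\E\mid\M}, d^{\E}_{\A})$ is extensible by construction, with $d_\E$ itself furnishing the extension, and Theorem~\ref{theorem extensible} forces every extensible pair to lie in $\D_{\AWB}(\M,\A)$. Alternatively one can unpack equations \eqref{eq_d1}--\eqref{eq_d4} directly from the AWB-derivation identities for $d_\E$ applied to elements of the form $s(a)m$ and $[s(a),m]$, exploiting that the action of $\A$ on $\M$ is realised by products and brackets with $s(a)$ in $\E$.

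For (ii), the first coordinate $d_\E \mapsto d_{\E\mid\M}$ clearly preserves commutators, since restriction commutes with composition of operators that preserve $\M$. For the second coordinate I would use the intertwining identity $p\circ d_\E = d^{\E}_{\A}\circ p$ that is built into the definition of $d^{\E}_{\A}$. Given a second $e_\E\in\Der_{\AWB}(\E|\M)$, applying this identity twice and subtracting yields $p\circ [d_\E,e_\E] = [d^{\E}_{\A},e^{\E}_{\A}]\circ p$; post-composing with $s$ and using $p\circ s = \id_\A$ gives $[d_\E,e_\E]^{\E}_{\A} = [d^{\E}_{\A}, e^{\E}_{\A}]$, which is the required identity. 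This brief computation is the only point of the proof that requires genuine effort, and it works cleanly because $d^{\E}_{\A}$ is independent of the section $s$.

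Part (iii), the principal content of the lemma, then reduces to a direct translation of Theorem~\ref{theorem extensible}. If $(d_\M,d_\A)=\kappa(d_\E)$, then $d_\E$ itself witnesses extensibility, so $\omega(d_\M,d_\A)=0$ by the theorem, giving $\Im(\kappa)\seq\Ker(\omega)$. Conversely, any $(d_\M,d_\A)\in\Ker(\omega)$ lies in $\D_{\AWB}(\M,\A)$ by the domain of $\omega$ and is extensible by Theorem~\ref{theorem extensible}; the resulting $d_\E\in\Der_{\AWB}(\E)$ satisfies $d_\E\circ i = i\circ d_\M$ (so $d_\E\in\Der_{\AWB}(\E|\M)$) and $p\circ d_\E=d_\A\circ p$, from which $\kappa(d_\E)=(d_\M,d_\A)$ is immediate after precomposing the latter with $s$. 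The main obstacle in the whole lemma is thus the short bracket-compatibility calculation in (ii); everything else is routine once Theorem~\ref{theorem extensible} is in hand.
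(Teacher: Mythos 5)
Your proof is correct and follows essentially the same route as the paper: (i) and the inclusion $\Im(\kappa)\subseteq\Ker(\omega)$ come from the observation that $\kappa(d_{\E})$ is an extensible pair together with Theorem~\ref{theorem extensible}, and the converse inclusion in (iii) produces $d_{\E}$ from the vanishing of $\omega$ exactly as the paper does (the paper re-runs the construction $d_{\E}(m+s(a))=d_{\M}(m)-h(a)+s(d_{\A}(a))$ explicitly, while you invoke the theorem as a black box — both are fine). The only real difference is in (ii), where the paper defers to \cite[Lemma 5.4]{TX} and you instead give the short intertwining computation $p\circ[d_{\E},d'_{\E}]=[d^{\E}_{\A},d'^{\E}_{\A}]\circ p$ directly; that computation is valid since $p\circ d_{\E}=d^{\E}_{\A}\circ p$ holds for every $d_{\E}\in\Der_{\AWB}(\E|\M)$, so your write-up is, if anything, more self-contained.
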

\begin{proof}
	(i) There is nothing to prove because, as we have shown above, $\kappa(d_{\E})\in \D_{\AWB}(\M, \A)$ for all $d_{\E}\in \Der_{\AWB}(\E| \M)$.
	
	(ii) We need to show that $\kappa\big(d_{\E}d'_{\E}-d'_{\E}d_{\E}\big) = \kappa(d_{\E})\kappa(d'_{\E} )- \kappa(d'_{\E})\kappa(d_{\E}) $ for all $d_{\E}, d'_{\E}\in \Der_{\AWB}(\E | \M)$. For this, it suffices to repeat the proof given in the case of associative algebras in \cite[Lemma 5.4]{TX}.
	
	(iii) Since $w(\kappa(d_{\E}))= w(d_{\E\mid\M}, d^{\E}_{\A})=0$ for all $d_{\E}\in \Der_{\AWB}(\M, \A)$, we have $\Im(\kappa) \subseteq \Ker (\omega)$.
	So we must show that $\Ker (\omega)\subseteq \Im(\kappa)$. For that, take $(d_{\M}, d_{\A})\in \Ker (\omega)$. Then $\theta \big((d_{\M}, d_{\A}), \cl(f,g) \big)=0$,
	where $(f, g)$ is the $0$-cocycle in $K^*(\A,\M)$ induced by the given extension ${E}$ as in \eqref{eq_1cocycle(f,g)}.
	So, there exists a linear map  \big($-1$-cochain in $K^*(\A,\M)$\big) $h\colon \A\to \M$ such that $\theta \big((d_{\M}, d_{\A}), cl(f,g) \big)=\cl (\pa^{-1}(h))$,
	i.e. \eqref{eq_f and h} and \eqref{eq_g and h} hold. We define $d_{\E}\colon \E\to \E$ in the same way as in \eqref{def_d_E},  $d_{\E}(m+s(a))=d_{\M}(m)-h(a) +s(d_{\A}(a))$.
	One can repeat the respective part of the proof of Theorem~\ref{theorem extensible} to show that $d_{\E} \in \Der_{\AWB}(\E)$.
	Moreover, it is obvious that $d_{\E}(\M)\subseteq \M$ and hence, $d_{\E} \in \Der_{\AWB}(\E|\M)$.
	At the same time, ${d_{\E}}_{\mkern 1mu \vrule height 2ex\mkern2mu {\M}} = d_{\M}$ and $p\circ d_{\E}\circ s = d_{\A}$, which means that $\kappa(d_{\E})=(d_{\A}, d_{\E})$ and the proof is completed.
\end{proof}

Let us denote by $Z^{-1}(\A, \M)$ the vector space of all $-1$-cocycles in the cochain complex $K^*(\A, \M)$, i.e. it consists of all linear maps $h\colon \A\to \M$ satisfying
\begin{align}\label{0 cocycle h}
	a_0\cdot h(a_1)-h(a_0a_1)+h(a_0)\cdot a_1=0
\end{align}
for all $a_0, a_1\in \A$.

\begin{theorem}\label{theorem Well seq}
	Let $E \colon  0\lra \M \overset{i}{\lra} \E\overset{p}{\lra} \A\lra 0$ be an abelian extension of an \textup{AWB} $\A$ by its representation $\M$. Then there is an exact sequence of vector spaces
	\begin{equation}\label{Well seq}
		0\lra  Z^{-1}(\A, \M) \overset{\sigma}\lra \Der_{\AWB}(\E|\M) \overset{\kappa}{\lra} \D_{\AWB}(\M, \A)\overset{\omega}{\lra} H^{1}_{\AWB}(A,M) .
	\end{equation}
\end{theorem}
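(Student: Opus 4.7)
My plan is to split the exactness of \eqref{Well seq} into three statements and handle them separately. Exactness at $\D_{\AWB}(\M,\A)$ is precisely Lemma~\ref{lemma image kappa}(iii), which is already proved. Thus the remaining tasks are to construct $\sigma$, prove its injectivity, and prove $\Im(\sigma)=\Ker(\kappa)$.

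I would define $\sigma$ by specialising formula \eqref{def_d_E} to $d_{\M}=0$, $d_{\A}=0$: for a $-1$-cocycle $h\colon\A\to\M$ set
\[
\sigma(h)\big(m+s(a)\big):=-h(a),\qquad m\in\M,\;a\in\A.
\]
To see that $\sigma(h)\in\Der_{\AWB}(\E\mid\M)$, I would check the six Leibniz-type identities \eqref{eq_d_E1}--\eqref{eq_d_E6}. Because $\M$ is an abelian two-sided ideal of $\E$ and $\sigma(h)$ vanishes on $\M$, all four mixed identities \eqref{eq_d_E1}, \eqref{eq_d_E2}, \eqref{eq_d_E4}, \eqref{eq_d_E5} reduce to $0=0$. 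For \eqref{eq_d_E3}, one substitutes $s(a_0)s(a_1)=f(a_0,a_1)+s(a_0a_1)$ from \eqref{eq_1cocycle(f,g)} and uses $\sigma(h)|_{\M}=0$ to obtain $\sigma(h)(s(a_0)s(a_1))=-h(a_0a_1)$; the right-hand side of \eqref{eq_d_E3} computes to $-h(a_0)\cdot a_1 - a_0\cdot h(a_1)$, so the equality is exactly the Hochschild component of $\partial^{-1}(h)=0$. An analogous computation reduces \eqref{eq_d_E6} to the bracket component. Linearity of $\sigma$ is clear, and injectivity is immediate since $\sigma(h)(s(a))=-h(a)$ recovers $h$.

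For exactness at $\Der_{\AWB}(\E\mid\M)$, the inclusion $\Im(\sigma)\subseteq\Ker(\kappa)$ is clear: $\sigma(h)|_{\M}=0$ gives the first component and $p\circ\sigma(h)\circ s=0$ the second. Conversely, given $d_{\E}\in\Ker(\kappa)$, the condition $p\circ d_{\E}\circ s=0$ puts $d_{\E}(s(a))\in\Ker p=i(\M)$, so there is a unique linear map $h\colon\A\to\M$ with $d_{\E}(s(a))=-h(a)$; together with $d_{\E}|_{\M}=0$ this yields $d_{\E}=\sigma(h)$. Running the Leibniz identities for $d_{\E}$ at $s(a_0)s(a_1)$ and $[s(a_0),s(a_1)]$ through the same computations as above, in reverse, produces both defining equations for a $-1$-cocycle, so $h\in Z^{-1}(\A,\M)$.

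The only step that might look like an obstacle is the verification of the Leibniz axioms for $\sigma(h)$, but this is the $(d_{\M},d_{\A})=(0,0)$ specialisation of the ``if'' direction already carried out inside the proof of Theorem~\ref{theorem extensible}, so no new computation is really needed beyond reading off those two displayed equalities. Everything else is formal.
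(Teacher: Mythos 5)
Your proposal is correct and follows essentially the same route as the paper: exactness at $\D_{\AWB}(\M,\A)$ is delegated to Lemma~\ref{lemma image kappa}(iii), $\sigma$ is defined by the $(d_{\M},d_{\A})=(0,0)$ specialisation of \eqref{def_d_E} (the paper's $\sigma(h)(m+s(a))=h(a)$ differs from yours only by a harmless sign), and the verification that $\sigma(h)$ is an \textup{AWB}-derivation and that $\Ker(\kappa)\subseteq\Im(\sigma)$ proceeds exactly as you describe. Your explicit reduction of the mixed Leibniz identities to $0=0$ and of \eqref{eq_d_E3}, \eqref{eq_d_E6} to the two components of $\partial^{-1}(h)=0$ is just a spelled-out version of what the paper leaves as ``easy to check.''
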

\begin{proof}
	Thanks to Lemma~\ref{lemma image kappa} (iii), it suffices to construct an injection of vector spaces $\sigma\colon  Z^{-1}(\A, \M) \to \Der_{\AWB}(\E|\M) $ and show exactness at the term  $\Der_{\AWB}(\E|\M)$.
	
	We define $\sigma$ as follows. For any $h\in Z^{-1}(\A, \M) $, let $\sigma (h) = d_{\E}$, where $d_{\E}\colon  \E\to \E$ is given by $d_{\E}(m+s(a))=h(a)$, for any
	$m\in \M$, $a\in \A$. Clearly ${d_{\E}}_{\mkern 1mu \vrule height 2ex\mkern2mu {\M}} = 0$ and $d^{\E}_{\A}(a) = (p\circ d_{\E} \circ s)(a)= p (h(a))=0$. Moreover, using \eqref{0 cocycle h}, it is easy to check that $d_{\E}$ is an \textup{AWB}-derivation. Hence $\sigma (h) = d_{\E}\in \Der_{\AWB}(\E|\M)$ and $\kappa(d_{\E}) =(0,0)$, i.e. $\Im(\sigma)\subseteq \Ker(\kappa) $. Obviously $\Ker(\sigma)=0$, so it is an injection.
	
	It remains to check that $\Ker(\kappa) \subseteq \Im(\sigma)$. For that, take $d_{\E}\in \Der_{\AWB}(\E|\M)$ such that
	$\kappa(d_{\E})=(0,0)$, i.e. ${d_{\E}}_{\mkern 1mu \vrule height 2ex\mkern2mu {\M}}=0$ and $p\circ d_{\E} \circ s =0$.
	 For any $a\in \A$ we get $d_{\E}(s(a))\in \Ker(p) = \M$. Thus there is a linear map $h\colon \A\to \M$ given by $h =  d_{\E} \circ s$.
	  One can easily check that $h$ does not depend on the choice of the linear section $s$, it satisfies condition
	\eqref{0 cocycle h}, i.e. $h\in Z^{-1}(\A, \M) $, and $\sigma(h)= d_{\E}$.
\end{proof}

\begin{corollary}\label{Cor Well seq}
	If the abelian extension $ 0\lra \M \overset{i}{\lra} \E\overset{p}{\lra} \A\lra 0$ is split, i.e. there is a homomorphism of \textup{AWB}'s $s\colon \A\to \E$ such that $p\circ s = \id_{\A}$,
	then there is an isomorphism of Lie algebras
	\[
	\Der_{\AWB}(\E|\M) \cong \D_{\AWB}(\M, \A)\times Z^{-1}(\A, \M),
	\]
	where $Z^{-1}(\A, \M)$ is considered as an abelian Lie algebra with trivial Lie bracket.
\end{corollary}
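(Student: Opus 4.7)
The plan is to upgrade the Wells exact sequence from Theorem~\ref{theorem Well seq} to a split short exact sequence by exploiting the fact that the splitting $s\colon \A \to \E$ is an actual AWB-homomorphism, not merely a linear section.

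First, since $s$ is an AWB-homomorphism with $p\circ s = \id_\A$, Remark~\ref{remark w=0} gives $\omega = 0$ identically. Inserting this into Theorem~\ref{theorem Well seq} produces a short exact sequence of vector spaces
\[
0 \lra Z^{-1}(\A, \M) \overset{\sigma}{\lra} \Der_{\AWB}(\E|\M) \overset{\kappa}{\lra} \D_{\AWB}(\M, \A) \lra 0.
\]

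Next, I construct a canonical section $\tau$ of $\kappa$. Because $s$ is a homomorphism, the map $\M \rtimes \A \to \E$, $(m, a) \mapsto i(m) + s(a)$, is an AWB-isomorphism, so every element of $\E$ is uniquely written as $m + s(a)$. For $(d_{\M}, d_{\A}) \in \D_{\AWB}(\M, \A)$, define
\[
\tau(d_{\M}, d_{\A})\colon \E \to \E, \quad m + s(a) \mapsto d_{\M}(m) + s(d_{\A}(a));
\]
equivalently, on $\M \rtimes \A$ it acts as $(m, a) \mapsto (d_{\M}(m), d_{\A}(a))$. Lemma~\ref{lemma_semi-dir} asserts precisely that the conditions \eqref{eq_d1}--\eqref{eq_d4} defining $\D_{\AWB}(\M, \A)$ are equivalent to $\tau(d_{\M}, d_{\A})$ being an AWB-derivation of $\M \rtimes \A$. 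It manifestly stabilizes $\M$ and satisfies $\kappa\tau = \id_{\D_{\AWB}(\M, \A)}$, so $\tau$ is the required section.

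Combining $\sigma$ and $\tau$ yields the asserted isomorphism of vector spaces
\[
\D_{\AWB}(\M, \A) \oplus Z^{-1}(\A, \M) \overset{\sim}{\lra} \Der_{\AWB}(\E|\M), \quad \big((d_{\M}, d_{\A}), h\big) \mapsto \tau(d_{\M}, d_{\A}) + \sigma(h).
\]
To check that this is a morphism of Lie algebras, I compute the three types of brackets directly from the formulas for $\sigma$ and $\tau$: the identity $[\sigma(h), \sigma(h')] = 0$ is immediate because the image of $\sigma$ annihilates $\M$ and takes values in $\M$; $[\tau(d), \tau(d')] = \tau([d, d'])$ follows by expanding on the canonical decomposition $m + s(a)$; and a short calculation gives the cross-bracket expressed via the same expression $d_{\M}\circ h - h \circ d_{\A}$ already encountered in Lemma~\ref{Lemma theta cocycle}(ii), which lies in $Z^{-1}(\A, \M)$. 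The main verification is this last bracket computation, which is where the interplay between the AWB-derivation structures on $\M$, $\A$ and the semi-direct product $\M \rtimes \A \cong \E$ is doing real work; the rest is bookkeeping to match these with the declared product structure on the right-hand side, with $Z^{-1}(\A, \M)$ abelian.
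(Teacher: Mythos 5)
Your overall strategy coincides with the paper's: both arguments invoke Remark~\ref{remark w=0} to get $\omega=0$, so that Theorem~\ref{theorem Well seq} becomes a short exact sequence, and both then split $\kappa$ by sending $(d_{\M},d_{\A})$ to the map $m+s(a)\mapsto d_{\M}(m)+s(d_{\A}(a))$. Your justification that this map is an \textup{AWB}-derivation---identifying $\E$ with $\M\rtimes\A$ through the homomorphic section and quoting Lemma~\ref{lemma_semi-dir}---is in fact cleaner than the paper's appeal to computations ``similar to the proof of Theorem~\ref{theorem extensible}''.

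There is, however, a genuine problem in your last step, and it exposes a gap that the paper's own (omitted) verification shares. You correctly compute
\[
[\tau(d_{\M},d_{\A}),\sigma(h)]=\sigma\big(d_{\M}\circ h-h\circ d_{\A}\big),
\]
and this is in general \emph{nonzero}: take $\A$ and $\M$ abelian with all actions trivial, $\E=\M\times\A$, $d_{\A}=0$, $d_{\M}=\id_{\M}$ and $h\neq 0$; then the bracket equals $\sigma(h)\neq 0$. In the direct product $\D_{\AWB}(\M,\A)\times Z^{-1}(\A,\M)$ with $Z^{-1}(\A,\M)$ abelian, the second factor is \emph{central}, so a Lie isomorphism onto the direct product would force all these cross-brackets to vanish. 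What your map actually establishes is a Lie isomorphism onto the semidirect product $\D_{\AWB}(\M,\A)\ltimes Z^{-1}(\A,\M)$ with action $(d_{\M},d_{\A})\cdot h=d_{\M}h-hd_{\A}$; the final ``bookkeeping'' cannot convert this into the direct product, and as written your conclusion contradicts your own computation. The paper's proof only produces a Lie-algebra section of $\kappa$, which likewise yields no more than a semidirect product decomposition, so the defect ultimately sits in the statement rather than in your method---but your write-up should either verify that the cross-bracket vanishes (it does not) or record the result as a semidirect product.
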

\begin{proof}
	As noted in Remark~\ref{remark w=0}, under the split condition $w=0$. Then the exact sequence~\eqref{Well seq} has the form
	\[
	0\lra  Z^{-1}(\A, \M) \overset{\sigma}\lra \Der_{\AWB}(\E|\M) \overset{\kappa}{\lra} \D_{\AWB}(\M, \A)\lra 0 .
	\]	
	Moreover, it is a short exact sequence of Lie algebras thanks to Lemma~\ref{lemma image kappa}  and by the fact that  $\sigma(h)\sigma(h')=0$ for all $h, h' \in Z^{-1}(\A, \M) $.
	
	For any $(d_{\M}, d_{\A}) \in  \D_{\AWB}(\M, \A)$ let us define $d_{\E}\colon {\E}\to \E$ by $d_{\E}(m+s(a))= d_{\A}(m)+ s(d_{\A}(a))$ for all $m\in \M$ and $a\in \A$.
	In the case  $w=0$, similarly to the proof of Theorem~\ref{theorem extensible}, it is easy to check that $d_{\E} \in \Der_{\AWB}(\E|\M) $. Thus we get a linear map
	$\D_{\AWB}(\M, \A) \to  \Der_{\AWB}(\E|\M)$, which actually is a homomorphism of Lie algebras and right inverse to $\kappa$. The details are direct computations, and we omit them.
\end{proof}

\section{Crossed modules of \textup{AWB}'s}

\subsection{Crossed modules}

\begin{definition}\label{def crossed module}
	A crossed module of  \textup{AWB}'s is a homomorphism of \textup{AWB}'s $\mu \colon  {\M} \to {\A}$ together with an action of $\A$ on $\M$ such that the following identities hold:
	\begin{enumerate}
		\item[(CM1)]
		$\begin{array} {lcllcl}
			\mu(m{ \cdot a})& = &\mu(m) a, & \mu({{a \cdot}} m) &= &a \mu(m),\\
			\mu\{m, a\}& = & [\mu(m), a],  & \mu\{a,m\}& = &[a, \mu(m)];
		\end{array}$
		\item[(CM2)]
		$\begin{array}{lcccl}
			{{\mu(m)  \cdot}} m' &=& m m' &=& m{ \cdot {\mu(m')}},\\
			
			\{\mu(m), m'\} &=& [m, m'] &=&  \{m, \mu(m')\}
		\end{array}$
	\end{enumerate}
	for all $m, m' \in {\M}$, $a \in{\A}$.
\end{definition}

\begin{definition}
	A morphism of crossed modules $\left({\M} \overset{\mu}\lra {\A} \right) \to \left({\M}' \overset{\mu'}\lra {\A}' \right)$
	is a pair $(\alpha, \beta)$, where $\alpha \colon  {\M} \to {\M}'$ and $\beta \colon  {\A} \to {\A}'$ are homomorphisms of \textup{AWB}'s satisfying:
	\begin{enumerate}
		\item[(a)] $\beta \circ \mu = \mu' \circ \alpha$.
		\item[(b)]
		$\begin{array}{lclclcl}
			\alpha({{a \cdot} m}) &=& {{\beta(a) \cdot}} \alpha(m), & & \alpha(m{ \cdot a}) &=&   \alpha(m){ \cdot {\beta(a)}},\\
			\alpha\{a,m\} &=& \{\beta(a), \alpha(m)\}, & & \alpha\{m, a\} &=&   \{\alpha(m), \beta(a)\}
		\end{array}$
	\end{enumerate}
	for all $a \in {\A}$, $m \in {\M}$.
\end{definition}

It is clear that crossed modules of \textup{AWB}'s constitute a category, denoted by {\sf XAWB}.

\

The following lemma is an easy consequence of Definition~\ref{def crossed module}.
\begin{lemma} \label{lemma cm}
	Let $\mu \colon  {\M} \to {\A}$ be a crossed module of algebras with bracket. Then the following statements are satisfied:
	\begin{enumerate}
		\item[(i)] $\Ker(\mu) \subseteq {\Z}(\M)$.
		\item[(ii)]  $\Im(\mu)$ is a two-sided ideal of ${\A}$.
		\item[(iii)]  $\Im(\mu)$ acts trivially on ${\Z}(\M)$, and so trivially on $\Ker(\mu)$. Hence $\Ker(\mu)$ inherits an action of ${\A}/\Im(\mu)$ making $\Ker(\mu)$ a representation of the algebra with bracket ${\A}/\Im(\mu)$.
	\end{enumerate}
\end{lemma}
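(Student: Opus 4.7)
The plan is to apply axioms (CM1) and (CM2) directly to each of the three claims in turn; no step is genuinely hard, and the only mild subtlety — the passage to an $\A/\Im(\mu)$-action in part (iii) — is settled once I have established that $\Im(\mu)$ acts trivially on $\Ker(\mu)$. I would deal with (i) and (ii) first as immediate specialisations of the axioms, and then feed both into (iii).

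For (i), take $k\in\Ker(\mu)$ and an arbitrary $m\in\M$. The two equations in (CM2) specialise: putting $k$ in the $m$-slot of $\mu(m)\cdot m'=mm'$ and of $\{\mu(m),m'\}=[m,m']$ yields $km=0$ and $[k,m]=0$, while putting $k$ in the $m'$-slot of $mm'=m\cdot\mu(m')$ and of $[m,m']=\{m,\mu(m')\}$ yields $mk=0$ and $[m,k]=0$; hence $k\in\Z(\M)$. For (ii), given $a\in\A$ and $m\in\M$, the four identities of (CM1) express $a\mu(m)$, $\mu(m)a$, $[a,\mu(m)]$, $[\mu(m),a]$ as $\mu(a\cdot m)$, $\mu(m\cdot a)$, $\mu\{a,m\}$, $\mu\{m,a\}$ respectively, so all four lie in $\Im(\mu)$, which is therefore a two-sided ideal.

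For (iii), I would first verify that $\Im(\mu)$ acts trivially on $\Z(\M)$: for any $m\in\M$ and $z\in\Z(\M)$, the equations of (CM2) identify $\mu(m)\cdot z$, $z\cdot\mu(m)$, $\{\mu(m),z\}$, $\{z,\mu(m)\}$ with the internal products and brackets $mz$, $zm$, $[m,z]$, $[z,m]$ in $\M$, all of which vanish by centrality of $z$. Part (i) then gives the same conclusion with $\Ker(\mu)$ in place of $\Z(\M)$. To induce the $\A/\Im(\mu)$-action on $\Ker(\mu)$, I would set $\bar a\cdot k:=a\cdot k$ and analogously for the three remaining bilinear maps of Definition~\ref{action def}; well-definedness is exactly the triviality just shown, since two lifts of $\bar a$ differ by an element of $\Im(\mu)$ which annihilates $k$. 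The axioms \eqref{equations_action} descend verbatim from the $\A$-action on $\M$. Finally, because $\Ker(\mu)\seq\Z(\M)$ carries trivial multiplication and bracket it is an abelian \textup{AWB}, so the last three lines of \eqref{equations_action} hold automatically and the induced $\A/\Im(\mu)$-action is in fact a representation, as recalled just after Definition~\ref{action def}.
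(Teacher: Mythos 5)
Your proof is correct and is precisely the direct verification from (CM1) and (CM2) that the paper intends: the paper states this lemma without proof, calling it an easy consequence of Definition~\ref{def crossed module}. The only step worth making explicit is that the $\A$-action on $\M$ restricts to $\Ker(\mu)$ in the first place (e.g.\ $\mu(a\cdot k)=a\,\mu(k)=0$ by (CM1), and similarly for the other three bilinear maps), which you use tacitly when setting $\bar a\cdot k := a\cdot k$.
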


\begin{example}
	\item[(i)] Let ${\A}$ be an \textup{AWB} and ${\B}$ be a two-sided ideal of ${\A}$, then the inclusion ${\B} \hookrightarrow {\A}$  is a crossed module, where the action of ${\A}$ on ${\B}$ is given by the operations in ${\A}$ (see Example~\ref{action}  {(ii)}).  Conversely, if $\mu \colon  {\B} \to {\A}$ is a crossed module of \textup{AWB}'s and $\mu$ is injective map, then ${\B}$ is isomorphic to a two-sided ideal of ${\A}$ by Lemma~\ref{lemma cm} { (ii)}.
	
	\item[(ii)] For any representation ${\M}$ of an \textup{AWB} ${\A}$, the trivial map $0 \colon {\M} \to {\A}$ is a crossed module with the action of ${\A}$ on the abelian \textup{AWB} ${\M}$ described in Example~\ref{action} {(i)}.
	
	Conversely, if $0 \colon  {\M} \to {\A}$ is a crossed module of \textup{AWB}'s, then ${\M}$ is necessarily an abelian \textup{AWB} and the action of ${\A}$ on ${\M}$
	is equivalent to  ${\M}$ being a representation of ${\A}$.
	
	\item[(iii)] Any homomorphism of \textup{AWB}'s $\mu \colon  {\M} \to  {\A}$, with ${\M}$ abelian and $\Im(\mu) \subseteq {\Z}({\A})$, provides a crossed module with ${\A}$ acting trivially on ${\M}$.
	
	\item[(iv)]  If $\mu \colon {\M}\to {\A}$ is a surjective homomorphism of \textup{AWB}'s and $\Ker(\mu) \subseteq {\Z}(\M)$,
	then $\mu$ is a crossed module with the induced action of ${\A}$ on ${\M}$ (see Example~\ref{action} {(v)}).
\end{example}

\begin{proposition}  Let $\mu \colon  {\M} \to {\A}$ be a crossed module of \textup{AWB}'s. Then the maps
	\begin{enumerate}
		\item[(i)] $(\mu, \id_{\A}) \colon  {\M} \rtimes {\A} \to {\A} \rtimes {\A}$,
		\item[(ii)] $(\id_{\M}, \mu) \colon  {\M} \rtimes {\M} \to {\M} \rtimes {\A}$,
		\item[(iii)]  $\varphi \colon  {\M} \rtimes {\A} \to {\M} \rtimes {\A}$ given by $\varphi(m, a) = (-m, \mu(m) + a)$,
	\end{enumerate}
	are homomorphisms of \textup{AWB}'s.
\end{proposition}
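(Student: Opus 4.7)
The plan is to verify in each of the three cases that the given map preserves both the associative product and the bracket on the underlying semi-direct product \textup{AWB}'s. All three parts reduce to direct expansions using the definition of the semi-direct product, after which the crossed module axioms together with the fact that $\mu$ is an \textup{AWB} homomorphism close the computation.

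For (i), with $(\mu,\id_{\A})(m,a)=(\mu(m),a)$, I would expand $(\mu,\id_{\A})\bigl((m_1,a_1)(m_2,a_2)\bigr)$ and $(\mu,\id_{\A})(m_1,a_1)\cdot(\mu,\id_{\A})(m_2,a_2)$, keeping in mind that in $\A\rtimes\A$ the action of $\A$ on itself is by multiplication and bracket. The required equality in the first coordinate, $\mu(m_1 m_2 + a_1\cdot m_2 + m_1\cdot a_2)=\mu(m_1)\mu(m_2)+a_1\mu(m_2)+\mu(m_1)a_2$, then follows at once from $\mu$ being a homomorphism and from (CM1); the bracket check is identical, using the bracket part of (CM1).

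For (ii), which sends $(m_1,m_2)\mapsto (m_1,\mu(m_2))$, $\M\rtimes\M$ uses the action of $\M$ on itself coming from its \textup{AWB} operations while $\M\rtimes\A$ uses the given action of $\A$ on $\M$. Expanding both sides, compatibility in the first coordinate reduces to the identities $\mu(m_2)\cdot m_1' = m_2 m_1'$, $m_1\cdot\mu(m_2')=m_1 m_2'$ and their bracket analogues, which are precisely (CM2); the second coordinate is handled by $\mu$ being a homomorphism.

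The only case with any real content is (iii). I would compute $\varphi\bigl((m_1,a_1)(m_2,a_2)\bigr)$ and $\varphi(m_1,a_1)\cdot\varphi(m_2,a_2)$ separately. The first coordinate of the latter expands as
\[
m_1 m_2 \;-\; \mu(m_1)\cdot m_2 \;-\; a_1\cdot m_2 \;-\; m_1\cdot\mu(m_2) \;-\; m_1\cdot a_2,
\]
and applying (CM2) to the two underlined-by-$\mu$ terms collapses it to $-(m_1 m_2 + a_1\cdot m_2 + m_1\cdot a_2)$, matching the first coordinate of $\varphi(xy)$. For the second coordinate one obtains $\mu(m_1)\mu(m_2)+\mu(m_1)a_2+a_1\mu(m_2)+a_1 a_2$, and (CM1) together with $\mu$ being a homomorphism rewrite this as $\mu(m_1 m_2+a_1\cdot m_2+m_1\cdot a_2)+a_1 a_2$, as required. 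The bracket check for $\varphi$ is entirely parallel, using the bracket halves of (CM1) and (CM2). The main (mild) obstacle is purely bookkeeping: the semi-direct product product and bracket each produce several cross-terms in part (iii), and one has to be careful to see that the minus sign on $m$ lines up the (CM2) cancellations with the terms absorbed into $\mu(m)+a$ on the second coordinate.
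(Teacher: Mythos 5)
Your proposal is correct and follows exactly the paper's approach: the paper simply records that (i) follows from (CM1), (ii) from (CM2), and (iii) from both, which is precisely the bookkeeping you carry out explicitly. Your expansions, including the sign cancellation in the first coordinate of part (iii) via (CM2) and the absorption of the cross-terms into $\mu(m)+a$ via (CM1), are all accurate.
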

\begin{proof}
	(i) is a direct consequence of equalities in (CM1) of Definition~\ref{def crossed module}, (ii) follows from equalities in (CM2), whilst (iii) requires both (CM1), (CM2).
\end{proof}

\

At the end of this section, we show that, as is always the case, crossed modules are equivalent to internal categories in the category $\AWB$.

An internal category $({\B},{\A},s,t,\sigma,\gm)$ in $\AWB$ is a diagram of \textup{AWB}'s of the form
\[
\xymatrix@C=0.5cm{
	\B\times_{\A}\B\ar[r]^{\ \ \ \gm}& \B \ar@<0.6mm>[rr]^{s}\ar@<-0.6mm>[rr]_{t} &&
	\A\ar@/_1.3pc/[ll]_{\sigma}
}
\]
such that $s\sigma=t\sigma=\id_{\A}$ and the ``operation''
$\gm$ satisfies the usual axioms of a category. Here $\B\times_{\A}\B= \{(b_1, b_2)\in \B\times \B \mid t(b_1)=s(b_2)  \}$. A morphism of internal categories
$(\B,\A,s,t,\sigma,\gm)\to (\B',\A',s',t',\sigma',\gm')$ is a pair of homomorphisms of \textup{AWB}'s
$(\B\overset{\vp}\lra\B',\A\overset{\psi}\lra\A')$ such that  $\psi s=s' \vp$,
$\psi t=t'\vp$, $\vp\sigma=\sigma' \psi$ and $\vp\gm = \gm'
(\vp\times \vp)$. Let us denote by $\IAWB$ the respective category of internal categories in $\AWB$.

\begin{theorem}\label{Theo inter cat}
	The categories $\XAWB$ and $\IAWB$ are equivalent.
\end{theorem}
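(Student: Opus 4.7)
The plan is to construct a pair of functors $F\colon\IAWB\to\XAWB$ and $G\colon\XAWB\to\IAWB$, and verify that they are mutually quasi-inverse. This adapts the classical Brown--Spencer style correspondence known for groups and for Lie/associative algebras; the extra bookkeeping is that the bracket operation must be treated alongside the multiplication, but at each step the two verifications run in parallel. For $F$, given an internal category $(\B,\A,s,t,\sigma,\gm)$, set $\M\coloneqq\Ker(s)$, a two-sided ideal of $\B$, and define $\mu\coloneqq t|_{\M}\colon\M\to\A$. An action of $\A$ on $\M$ is obtained by transporting the operations of $\B$ along $\sigma$:
\[
a\cdot m\coloneqq\sigma(a)m,\quad m\cdot a\coloneqq m\sigma(a),\quad \{a,m\}\coloneqq[\sigma(a),m],\quad \{m,a\}\coloneqq[m,\sigma(a)];
\]
these expressions land back in $\M$ because $s$ is an \textup{AWB}-homomorphism, and the axioms \eqref{equations_action} follow from associativity and \eqref{FE} inside $\B$. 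The conditions (CM1) are immediate from $t\sigma=\id_\A$ and the fact that $t$ is a homomorphism. The Peiffer axioms (CM2) are the only point where $\gm$ is needed: for $m,m'\in\M$, combining the identity laws $\gm(m,\sigma\mu(m))=m$ and $\gm(\sigma(0),m')=m'$ with $\sigma(0)=0$ and the interchange identity (which is precisely the hypothesis that $\gm$ is an \textup{AWB}-homomorphism on $\B\times_\A\B$), one obtains
\[
mm' = \gm(m,\sigma\mu(m))\,\gm(\sigma(0),m') = \gm(0,\sigma\mu(m)\,m') = \sigma\mu(m)\,m' = \mu(m)\cdot m',
\]
and the other three equalities of (CM2) are handled analogously using the bracket part of interchange.

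For $G$, given a crossed module $\mu\colon\M\to\A$, set $\B\coloneqq\M\rtimes\A$ with $s(m,a)\coloneqq a$, $t(m,a)\coloneqq \mu(m)+a$, and $\sigma(a)\coloneqq(0,a)$; these are \textup{AWB}-homomorphisms (the case of $t$ uses (CM1) together with $\mu$ being a homomorphism), and clearly $s\sigma=t\sigma=\id_\A$. On composable pairs define
\[
\gm\bigl((m_1,a),(m_2,\mu(m_1)+a)\bigr)\coloneqq(m_1+m_2,a).
\]
The unit and associativity laws of an internal category are immediate from the additive structure, and functoriality on morphisms is direct. The main technical point, which I expect to be the principal obstacle, is checking that this $\gm$ is itself an \textup{AWB}-homomorphism: after expanding both sides of $\gm(X\cdot Y)=\gm(X)\gm(Y)$ on composable $X,Y\in\B\times_\A\B$, the unambiguous terms cancel and what remains are cross-terms such as $\mu(m_1)\cdot m_2'$ on one side and $m_1m_2'$ on the other, together with their bracket analogues; the computation closes precisely because of (CM2). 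The Peiffer axioms are thus indispensable, and they are used on exactly the same footing as in the other direction.

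To verify that $F$ and $G$ are mutually quasi-inverse, applying $F\circ G$ to $\mu\colon\M\to\A$ recovers $\Ker(s)=\M\times\{0\}\cong\M$ with $\mu$ equal to the restriction of $t$, and the transported action agrees with the original by direct inspection of the formulas defining $\M\rtimes\A$. In the other direction, every $b\in\B$ decomposes uniquely as $b=\bigl(b-\sigma s(b)\bigr)+\sigma s(b)$ with $b-\sigma s(b)\in\Ker(s)$, so the map $\Ker(s)\rtimes\A\to\B$, $(m,a)\mapsto m+\sigma(a)$, is a linear bijection; it is an \textup{AWB}-isomorphism because $\sigma$ is an \textup{AWB}-homomorphism, and it clearly commutes with $s$, $t$, $\sigma$. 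Compatibility with $\gm$ follows from linearity of $\gm$ together with the two identity laws, by splitting a composable pair as
\[
\bigl(m_1+\sigma(a),\,m_2+\sigma(\mu(m_1)+a)\bigr)=\bigl(m_1,\sigma\mu(m_1)\bigr)+\bigl(\sigma(a),\,m_2+\sigma(a)\bigr)
\]
and applying $\gm$ termwise. Naturality of both isomorphisms in morphisms is then routine.
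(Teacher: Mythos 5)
Your proposal is correct and follows essentially the same route as the paper: both directions use $\M=\Ker(s)$ with $\mu=t|_{\M}$ and the $\sigma$-induced action, derive (CM2) from the unit laws together with $\gm$ being an \textup{AWB}-homomorphism, and construct the quasi-inverse via the semidirect product with $\gm\bigl((m,a),(m',\mu(m)+a)\bigr)=(m+m',a)$. The only difference is that you spell out the natural isomorphism $\Ker(s)\rtimes\A\cong\B$ in more detail than the paper does, which is a welcome addition rather than a deviation.
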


\begin{proof}
	
	To any object $({\B},{\A},s,t,\sigma,\gm)$ of the category $\IAWB$, we associate a crossed module
	$\mu\colon  \M\to \A$, where $\M=\Ker(s)$, $\mu=t_{\mkern 1mu \vrule height 2ex\mkern2mu {\M}}$ and the action of $\A$ on
	$\M$ is induced by the homomorphism $\sigma$. Since $t$ is a homomorphism of \textup{AWB}'s and $t(\sigma(a))=a$ for $a\in \A$, then all equalities in (CM1) of Definition~\ref{def crossed module} are trivially verified.
	Moreover, since $s\sigma = t \sigma=\id_{\A}$, by unities properties of the given
	internal category, we get
	\begin{align*}
		& \gm\Big(m+\sigma(a), \sigma(\mu(m)+a)+m'\Big) = \gm\Big(m+\sigma(a) + \sigma s(m'),
		\sigma t(m+ \sigma(a) )+m'\Big)\\
		& \qquad =\gm\big(m+\sigma(a), \sigma t(m+ \sigma(a))\big)+\gm\big(\sigma s (m'), m'\big)=m+\sigma(a)  +m'
	\end{align*}
	for all $a\in \A$ and $m,m'\in \M=\Ker(s)$. This equality and the fact that $\gamma$ is a homomorphism of \textup{AWB}'s, ensures all identities in (CM2).
	For instance, since
	\begin{align*}
		& \gm\Big[ \big(m_1 +\sigma(a_1), \sigma(\mu(m_1)+a_1)+m{_1}' \big), \big(m_2+\sigma(a_2), \sigma(\mu(m_2)+a_2)+m{_2}' \big) \Big] \\
		& \qquad = [m_1+\sigma(a_1)  +m{_1}', m_2+\sigma(a_2)  +m{_2}'],
	\end{align*}
	by considering $a_1=a_2=0$ and $m{_1}'=m_2=0$, we get
	\[
	\{\mu(m_1), m{_2}'\}= [\sigma\mu(m_1), m{_2}']= [m_1, m{_2}'].
	\]
	Hence $\mu\colon  \M\to \A$ indeed is a crossed module of \textup{AWB}'s.
	
	Moreover, if $(\vp, \psi)\colon (\B,\A,s,t,\sigma,\gm)\to (\B',\A',s',t',\sigma',\gm')$ is a morphism of internal categories, then it is easy to see that $(\vp_{\mkern 1mu \vrule height 2ex\mkern2mu {\M}}, \psi)\colon  \left({\M} \overset{\mu}\lra {\A} \right) \to \left({\M}' \overset{\mu'}\lra {\A}' \right) $
	is a morphism of the corresponding crossed modules. Thus this assignment defines a functor $\Psi\colon  \IAWB \to \XAWB$.
	
	Conversely, we now construct a functor  $\Psi'\colon  \XAWB \to \IAWB$, which will be a quasi-inverse of the functor $\Psi$.
	
	Let $\mu\colon \M\to\A$ be a crossed module of \textup{AWB}'s. Consider the semi-direct product $\M\rtimes \A$ together with maps
	$s,t\colon  \M\rtimes \A\to \A$ given by $s(m,a)=a$ and $t(m,a)=\mu(m)+a$. $s$ obviously is a
	homomorphism of \textup{AWB}'s. $t$ also is a homomorphism of \textup{AWB}'s since by (CM1) we get
	
	\begin{align*}
		t\big[ (m_1, a_1), (m_2, a_2) \big] &=  \mu[m_1, m_2] + \mu\{a_1, m_2\} + \mu\{m_1, a_2\} + [a_1, a_2]\\
		&	= [\mu (m_1), \mu( m_2)] + [a_1, \mu(m_2)] + [\mu(m_1), a_2] + [a_1, a_2] \\
		& = \big[\mu(m_1) + a_1, \mu(m_2) + a_2 \big] = \big[t(m_1, a_1), t(m_2, a_2) \big]
	\end{align*}
	And similarly $	t\big((m_1, a_1) (m_2, a_2) \big) = t(m_1, a_1) t(m_2, a_2)$. Moreover, applying the crossed module conditions
	(CM1) and (CM2), it is straightforward to see that the
	diagram
	\[	
	\xymatrix@C=0.5cm{
		(\M\rtimes\A)\times_{\A}
		(\M\rtimes\A)\ar[r]^{\qquad \ \ \ \gm}&
		\M\rtimes\A
		\ar@<0.6mm>[rr]^{s}\ar@<-0.6mm>[rr]_{t} &&
		\A\ar@/_1.3pc/[ll]_{\sigma} \;,
	}
	\]
	is an internal category in $\AWB$ whose ``operation'' $\gm$ is
	given by $\gm\big((m,a),(m',\mu(m)+a)\big)=(m+m',a)$ and $\sigma(a)=(0,a)$.
	Here observe that any element of
	$(\M\rtimes\A)\times_{\A}
	(\M\rtimes\A)$ is of the form $\big((m,a),(m',\mu(m)+ a)\big)$.
	
	Finally, the assignment $(\M\overset{\mu}\to\A)\to (\A\rtimes \A,\A, s,t,\sigma,\gm)$ is clearly functorial and
	provides the required quasi-inverse for the functor $\Psi$.
\end{proof}

\section{Crossed extensions and the second cohomology of \textup{AWB}'s}

Given a crossed module of \textup{AWB}'s $\mu\colon \MM\to \AAA$ we consider $\A=\Coker \mu$
and $\M=\Ker \mu$. By Lemma~\ref{lemma cm} (iii), $\M$ has a
well-defined structure of a representation of the \textup{AWB} $\A$.

Now let $\A$ be any \textup{AWB} and $\M$
a representation of $\A$. {\em A crossed extension} of
$\A$ by $\M$ is an exact sequence of \textup{AWB}'s
\[
\mathcal{E} \colon  0\lra\M\overset{\sigma}\lra{\MM}\overset{\mu}\lra{\AAA}\overset{\pi}\lra\A\lra 0\;,
\]
such that $\mu\colon  \MM\to \AAA$ is a crossed module
and the induced structure of $\A$-representation on
$\M$ coincides with the given one.

Let
$\mathcal{E'} \colon  0\lra\M\overset{\sigma'}\lra {\MM'}\overset{\mu'}\lra \AAA' \overset{\pi'}\lra\A\lra
0$ be another crossed extension of $\A$ by
$\M$. {\em A morphism of crossed extensions} from
$\mathcal{E}$ to $\mathcal{E}'$ is a morphism of crossed modules
of \textup{AWB}'s $(\al,
\beta)\colon (\MM\overset{\mu}\lra \AAA)\to
(\MM'\overset{\mu'}\lra \AAA')$ such that the
diagram
\[
	\xymatrix{
		0 \ar[r] & \M \ar@{}|{\parallel}[d] \ar[r]^{\sigma} & \ \MM \ar[d]_{\al}
		\ar[r]^{\mu} & \AAA \ar[d]_{\beta} \ar[r]^{\pi} & \A \ar@{}|{\parallel}[d] \ar[r] &
		0 \\
		0 \ar[r] & \M \ar[r]^{\sigma'} & \MM' \ar[r]^{\mu'} & \AAA' \ar[r]^{\pi'} & \A
		\ar[r] & 0   }
\]
is commutative.

Given two crossed extensions $\mathcal{E}$ and $\mathcal{E}'$ of
$\A$ by $\M$, they are said to be  {\em
	elementary equivalent} if there is a morphism from one to the
other. We consider the equivalence relation generated by the elementary equivalence in the set of all crossed extensions.
Let
$\XExt(\A,\M)$ denote the set of equivalence
classes of crossed extensions of the \textup{AWB}
$\A$ by the representation $\M$ of $\A$.
The set $\XExt(\A,\M)$ is not empty since it contains the class of the trivial crossed extension
\begin{equation}\label{trivial crossed extension}
	\mathcal{E}_0 \colon  0\lra\M\overset{=}\lra {\M}\overset{0}\lra \A \overset{=}\lra\A\lra 0.
\end{equation}

\

\begin{theorem}\label{T11}
	For any \textup{AWB} $\A$ and a representation $\M$ of $\A$ there is a canonical
	bijection
	\[
	\eta\colon {\XExt}(\A,\M)\overset{\approx}\lra
	H_{\AWB}^2(\A,\M).
	\]
	Moreover, $\eta$ maps the class of the trivial crossed extension $\mathcal{E}_0$ to the zero element in $H_{\AWB}^2(\A,\M)$.
\end{theorem}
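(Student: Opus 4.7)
The plan is to construct $\eta$ by extracting a $1$-cocycle from any crossed extension via a choice of linear sections, and to produce an inverse using a free \textup{AWB}-presentation of $\A$, in the spirit of \cite{BaMi,CKL1,Wa}.

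\textbf{Definition of $\eta$.} Given a crossed extension $\mathcal{E}\colon 0\to\M\overset{\sigma}{\to}\MM\overset{\mu}{\to}\AAA\overset{\pi}{\to}\A\to 0$, I would fix linear sections $\tau\colon\A\to\AAA$ of $\pi$ and $\rho\colon\Im(\mu)\to\MM$ of the corestriction of $\mu$. Since $\tau(a_0)\tau(a_1)-\tau(a_0a_1)$ and $[\tau(a_0),\tau(a_1)]-\tau[a_0,a_1]$ lie in $\Im(\mu)$, the formulas
\[
\psi(a_0,a_1)=\rho\bigl(\tau(a_0)\tau(a_1)-\tau(a_0a_1)\bigr),\quad \chi(a_0,a_1)=\rho\bigl([\tau(a_0),\tau(a_1)]-\tau[a_0,a_1]\bigr)
\]
define maps $\psi,\chi\colon\A^{\otimes 2}\to\MM$. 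Plugging the pair $(\psi,\chi)$ into the $\pa^0$-formulas \eqref{eq_0_cocycle_1}--\eqref{eq_0_cocycle_2}, but with all products, brackets and actions taking place in $\MM$ via the $\AAA$-action transported through $\tau$, produces maps $\widetilde{\mathsf f}\colon\A^{\otimes 3}\to\MM$ and $\widetilde{\mathsf g}\colon\A^{\otimes 2}\to\Hom(\A,\MM)$. A direct application of (CM1) and (FE) shows $\mu\circ\widetilde{\mathsf f}=0$ and $\mu\circ\widetilde{\mathsf g}=0$, so both factor uniquely through $\sigma$ to give $(\mathsf f,\mathsf g)\in K^1(\A,\M)$. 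The cocycle identities \eqref{eq_1_cocycle_1}--\eqref{eq_1_cocycle_2} then follow from the algebra axioms in $\AAA,\MM$ combined with (CM2). Set $\eta(\cl(\mathcal{E}))=\cl(\mathsf f,\mathsf g)$.

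\textbf{Well-definedness and inverse.} A different choice of sections $(\tau',\rho')$ differs from $(\tau,\rho)$ by a linear map $\ell\colon\A\to\MM$ with $\mu\ell=\tau-\tau'$; standard computations show that the induced change in $(\mathsf f,\mathsf g)$ is precisely $\pa^0$ applied to the element of $K^0(\A,\M)$ extracted from $\ell$, so the cohomology class is preserved. An elementary equivalence $(\alpha,\beta)\colon\mathcal{E}\to\mathcal{E}'$ allows compatible choices $\tau'=\beta\tau$, $\rho'=\alpha\rho$, yielding identical cocycles, so $\eta$ descends to a well-defined map on $\XExt(\A,\M)$. For the inverse, I would fix a free \textup{AWB}-presentation $F\twoheadrightarrow\A$ with kernel $R$. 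Any class in $H^2_{\AWB}(\A,\M)$ can be represented by an \textup{AWB}-homomorphism $\phi\colon R\to\M$ that vanishes on $[[R,F]]$ and is $F$-equivariant; letting $R_0=\Ker(\phi)$, the sequence
\[
0\lra\M\lra R/R_0\lra F/R_0\lra\A\lra 0
\]
is a crossed extension by Lemma~\ref{lemma cm}. A diagram chase confirms that this construction is inverse to $\eta$ modulo equivalence.

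\textbf{Main obstacle and the trivial extension.} The principal technical difficulty is the bracket component: verifying that $\mu\widetilde{\mathsf g}=0$ and that \eqref{eq_1_cocycle_2} holds requires simultaneous use of the fundamental relation (FE), all the action axioms \eqref{equations_action}, and both lines of (CM1)--(CM2); the same mixture of identities is needed to see that every cohomology class actually arises from some crossed extension. Once these compatibilities are established, the bijectivity of $\eta$ reduces to routine checks. Finally, for the trivial extension $\mathcal{E}_0$ of \eqref{trivial crossed extension} we may take $\tau=\id_{\A}$, so $\Im(\mu)=0$ forces $\psi=\chi=0$ and hence $(\mathsf f,\mathsf g)=(0,0)$, giving $\eta(\cl(\mathcal{E}_0))=0$.
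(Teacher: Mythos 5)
Your construction of $\eta$ itself is essentially the paper's: choose linear sections of $\pi$ and of $\mu$ onto its image, measure the failure of the first section to be a homomorphism by a pair $(f,g)$ with values in $\MM$, apply the $\partial^0$-formulas using the $\AAA$-action transported through the section, and observe via (CM1) and \eqref{FE} that the result lands in $\Ker(\mu)=\M$ and is a $1$-cocycle. That part, and the computation for $\mathcal{E}_0$, are fine.

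The inverse construction, however, has a genuine gap, in fact two. First, the four-term sequence you write down,
\[
0\lra\M\lra R/R_0\lra F/R_0\lra\A\lra 0 \quad\text{with } R_0=\Ker(\phi),
\]
cannot be exact: the map $R/R_0\to F/R_0$ is induced by the inclusion $R\subseteq F$ and is therefore injective, so its kernel is $0$ rather than (the image of) $\M$; moreover $R/R_0\cong\Im(\phi)\subseteq\M$, so $\M$ cannot inject into it in a way compatible with exactness unless $\M=0$. What you presumably want is the \emph{induced} (pushed-out) crossed module $\bigl(\M\times R\bigr)/\{(\phi(r),-r)\mid r\in R\}\to F$, i.e. the $\MM_{\phi}$ construction that the paper only introduces later for Baer sums; with that middle term one does get $\Ker\cong\M$ and $\Coker\cong\A$. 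Second, and more seriously, the assertion that every class in $H_{\AWB}^2(\A,\M)$ is represented by an $F$-equivariant \textup{AWB}-homomorphism $\phi\colon R\to\M$ vanishing on $[[R,F]]$ is itself an unproved identification of $H_{\AWB}^2$ (a Hopf-type/five-term-sequence statement) that is essentially equivalent to the theorem being proved; you cannot take it as given. The paper avoids this by a different mechanism: it pulls the $1$-cocycle $(\mathsf{f},\mathsf{g})$ back along $\nu\colon F(\A)\to\A$ to $K^*(F(\A),\M)$, invokes the vanishing of $H^2_{\AWB}$ of a free \textup{AWB} (\cite[Lemma 4.7]{CP}) to write $\nu^1(\mathsf{f},\mathsf{g})=\delta^0(f,g)$, and then uses the cobounding cochain $(f,g)$ restricted to $\V=\Ker\nu$ to define a twisted \textup{AWB} structure and $F(\A)$-action on $\M\oplus\V$, yielding the crossed extension directly. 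To repair your argument you would either need to adopt that route or independently prove the representability of classes by equivariant maps $R\to\M$ together with the correct pushout construction.
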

\begin{proof} We define $\eta\colon \XExt(\A,\M)\to
	H_{\AWB}^2(\A,\M)$ as follows. Given
	\[
	\mathcal{E} \colon  0\lra\M\overset{\sigma}\lra \MM\overset{\mu}\lra \AAA \overset{\pi}\lra\A\lra
	0
	\]
	choose linear sections $s\colon \A\to \AAA$, $\pi
	s=\id_{\A}$ and $\rho\colon \Im \mu\to \MM$,
	$\mu\rho=\id_{\Im\mu}$.
	
	For any $a_0,a_1\in \A$ we have
	\[
	s(a_0)s(a_1)-s\left( a_0a_1\right)\in \Ker \pi=\Im \mu \quad \text{and} \quad [s(a_0),s(a_1)]-s[a_0,a_1]\in \Ker \pi=\Im \mu .
	\]
	Let $f\colon \A^{\otimes 2} \to \MM$ and $g\colon \A \to \Hom(\A, \MM)$ be given by
	\begin{align*}
		f(a_0,a_1) & =\rho\big( s(a_0)s(a_1)-s\left( a_0a_1\right) \big), \\
		g(a_0)(a_1) & = \rho\big([s(a_0),s(a_1)]-s[a_0,a_1]\big),
	\end{align*}
	and define
	\begin{align}\label{eq7}
		\mathsf{f}_{\mathcal{E}}(a_0, a_1, a_2)&= s(a_0)\cdot f(a_1,a_2)- f(a_0a_1,a_2)
		+f(a_0, a_1a_2) -f(a_0, a_1)\cdot s(a_2) ;
	\end{align}
	\begin{align}\label{eq7*}
		\mathsf{g}_{\mathcal{E}}(a_0,a_1)(a_2)&=s(a_0)\cdot \left( g(a_1)(a_2)\right) -g(a_0a_1)(a_2)+\left( g(a_0)(a_2)\right)\cdot s(a_1) \\ \notag
		& \quad -\{f(a_0,a_1),s(a_2)\} +f \left( [a_0,a_2],a_1 \right)  +f\left(a_0,[a_1,a_2]\right) .
	\end{align}
	
	Using the crossed module conditions (CM1) in Definition~\ref{def crossed module} and the fundamental identity \eqref{FE}, we directly verify
	that $\mu \mathsf{f}_{\mathcal{E}}(a_0, a_1, a_2)=0$ and $\mu \mathsf{g}_{\mathcal{E}}(a_0, a_1)(a_2)=0$. Thus
	we have defined linear maps
	$\mathsf{f}_{\mathcal{E}}\colon \A^{\otimes 3}\to \M$ and $\mathsf{g}_{\mathcal{E}}\colon \A^{\otimes 2}\to \Hom(\A,\M)$.
	By comparing \eqref{eq7} and \eqref{eq7*} respectively with \eqref{eq_0_cocycle_1} and \eqref{eq_0_cocycle_2},  we
	note that if only $f(a_0,a_1), \ g(a_0)(a_1)\in \M$, for all $a_0, a_1\in \A$, then  the definition of
	$(\mathsf{f}_{\mathcal{E}},\mathsf{g}_{\mathcal{E}} )$ would be read $(\mathsf{f}_{\mathcal{E}},\mathsf{g}_{\mathcal{E}} ) =\partial^0 (f,g)$  and
	hence would give $\partial^1(\mathsf{f}_{\mathcal{E}},\mathsf{g}_{\mathcal{E}} )=\partial^1\partial^0
	(f,g)=0$. Routine calculations show that $\partial^1(\mathsf{f}_{\mathcal{E}},\mathsf{g}_{\mathcal{E}} )$
	still is $0$ in our case too.
	Thus the pair $(\mathsf{f}_{\mathcal{E}},\mathsf{g}_{\mathcal{E}} )$ is a $1$-cocycle of the complex $K^*(\A,\M)$ and we define $\eta  \left( \mathcal{E}\right) $ to be the class of  $(\mathsf{f}_{\mathcal{E}},\mathsf{g}_{\mathcal{E}} )$ in
	$H_{\AWB}^2(\A,\M)=H^1\big(  K^*(\A,\M)\big) $.
	
	It requires standard but tedious calculations to show that $\eta$ is a well-defined map, i.e. the class of
	$(\mathsf{f}_{\mathcal{E}},\mathsf{g}_{\mathcal{E}} )$ does not depend on the sections $s$, $\rho$ and on a representative of the class of $\mathcal{E}$. We will leave this to the reader as an exercise.
	
	By construction, it is readily seen that $\eta(cl (\mathcal{E}_0))=0$, as required.
	
	In the converse direction we define a map $\eta'\colon H_{\AWB}^2(\A,\M)\to
	{\XExt}(\A,\M)$ as follows. Let $F(\A)$ be the free \textup{AWB} on the underlying vector space of $\A$ (for construction see~\cite[Proposition 3.1]{CP}), and
	$\nu\colon F(\A)\to \A$  the canonical projection. Then $\M$ can be considered as a representation of $F(\A)$ via  the homomorphism $\nu$. Moreover, we have the map of  cochain complexes
	$\nu^*\colon K^*(\A,\M)\to K^*(F(\A),\M)$ induced by $\nu$. In low dimensions, it gives the following diagram with commutative squares
	\begin{equation}
		\xymatrix{
			K^0(\A,\M) \ar[d]_{\nu^0} \ar[r]^{\partial^0} & K^1(\A,\M)
			\ar[d]_{\nu^1} \ar[r]^{\partial^1} & K^2(\A,\M) \ar[d]_{\nu^2}
			\ar[r]^{\qquad \partial^2} & \cdots \\
			K^0(F(\A),\M)  \ar[r]^{\delta^0} & K^1(F(\A),\M)
			\ar[r]^{\delta^1} & K^2(F(\A),\M)
			\ar[r]^{\qquad \ \ \delta^2} & \cdots }
	\end{equation}
	For any $1$-cocycle $(\mathsf{f},\mathsf{g})\in K^1(\A,\M)$ of the complex $K^*(\A,\M)$, since $\nu^1 (\mathsf{f},\mathsf{g})$ is a $1$-cocycle of
	$K^*(F(\A),\M)$ and the second cohomology of any free \textup{AWB} is trivial (see \cite[Lemma 4.7]{CP}), then we deduce that there exists a $0$-cochain
	$(f,g)\in K^0(F(\A),\M)$
	such that $\nu^1 (\mathsf{f},\mathsf{g}) = \delta^0 (f,g)$. Thus, for all $\ol a_1, \ol a_2, \ol a_3 \in F(\A)$ we have  $\delta^0 (f,g)(\ol a_1, \ol a_2, \ol a_3 ) = \big(\mathsf{f}(\ol a_1, \ol a_2, \ol a_3), \mathsf{g}(\ol a_1, \ol a_2)(\ol a_3)\big) $, which is equal to $(0, 0)$ as soon as one of $\ol a_i\in \Ker\nu$, $i=1,2,3$. Furthermore, since any element of $\Ker\nu$ acts trivially on the elements from $\M$, thanks to the equations \eqref{eq_0_cocycle_1} and \eqref{eq_0_cocycle_2},  we get immediately
	\begin{align}
		f(v_1v_2, v_3) & = f(v_1, v_2v_3), \label{eq_VM1}\\
		g(v_1 v_2)(v_3) & = f\big([v_1,v_3],v_2\big) +	f\big(v_1, [v_2,v_3]\big) \label{eq_VM2}
	\end{align}
	for all $v_1,v_2, v_3\in \Ker \nu$.
	
	Now we claim that the following sequence is a
	crossed extension of $\A$ by $\M$
	\begin{align}\label{eq13}
		&\mathcal{E}_{(\mathsf{f},\mathsf{g})}  \colon  0\lra\M\overset{\sigma}\lra\M\oplus\V\xrightarrow{\mu_{(\mathsf{f},\mathsf{g})}}
		F(\A)\overset{\nu}\lra\A\lra 0\;,
	\end{align}
	where $\V=\Ker \nu$, $\sigma(m)=(m,0)$; the \textup{AWB} structure on
	$\M\oplus\V$ is given by
	\begin{align}
		(m_1,v_1)(m_2,v_2) & = (f(v_1,v_2),v_1v_2), \label{eq_VM Asso} \\
		[(m_1,v_1),(m_2,v_2)] & =\big( g(v_1)(v_2),[v_1,v_2]\big) \;;\label{eq_VM AWB}
	\end{align}
	the action of $F(\A)$ on $\M\oplus\V$
	is given by
	\begin{align}\label{eq12}
		\bar{a} \cdot (m,v) & =\big(\nu (\bar{a}) \cdot m +f(\bar{a},v), \bar{a} v \big)\;,\\
		(m,v)\cdot\bar{a} & =\big(m\cdot \nu (\bar{a})+f(\bar{a},v), v\bar{a}\big)\;,\nonumber\\
		\{\bar{a},(m,v)\} & =\big(\{\nu (\bar{a}), m\}+g(\bar{a})(v),[\bar{a},v]\big)\;,\nonumber\\
		\{(m,v),\bar{a}\} & =\big(\{m,\nu (\bar{a})\}+g(\bar{a})(v),[v,\bar{a}]\big)\;,\nonumber
	\end{align}
	and the map $\mu_{(\mathsf{f},\mathsf{g})}$ is defined by $\mu_{(\mathsf{f},\mathsf{g})}(m,v)=v$, for every
	$m,m_i\in\M$, $v,v_i\in\V$ ($i=1,2$) and
	$\bar{a}\in F(\A)$.
	
	It is routine to show that \eqref{eq_VM Asso} and \eqref{eq_VM AWB} indeed define an \textup{AWB} structure on $\M\oplus\V$. For instance, the equality \eqref{eq_VM1} amounts to the associativity axiom in  $\M\oplus\V$, whilst \eqref{eq_VM1} shows that the fundamental equality \eqref{FE} holds.
	At the same time, checking that the equations in \eqref{eq12} really define an
	action of $F(\A)$ on $\M\oplus\V$ requires direct calculations and the fact that
	$\delta^0 (f,g)(\ol a_1, \ol a_2, \ol a_3 ) =(0,0)$ whenever $\ol a_i\in \V$ for some $i=1,2,3$.
	Further,  $\mu_{(\mathsf{f},\mathsf{g})}$ clearly satisfies the conditions (CM1), (CM2) and so it is a crossed module of \textup{AWB}'s.
	
	Thus $\mathcal{E}_{(\mathsf{f},\mathsf{g})}$ is a crossed extension of $\A$ by
	$\M$ and we define $\eta'(\mathsf{f},\mathsf{g})$ to be the class of
	$\mathcal{E}_{(\mathsf{f},\mathsf{g})}$ in $\XExt(\A,\M)$.
	
	Finally, omitting straightforward but tedious details, we  note that $\eta'$ is a well-defined map from
	$H_{\AWB}^2(\A,\M)$ to ${\XExt}(\A,\M)$, i.e. the class of
	$\mathcal{E}_{(\mathsf{f},\mathsf{g})}$ does not depend on a
	representative of the class of $(\mathsf{f},\mathsf{g})$ in $H_{\AWB}^2(\A,\M)$ and on the chosen pair $(f,g)\in K^0(F(\A),\M) $, and it is a two-sided inverse to $\eta$.
\end{proof}

The bijection in Theorem~\ref{T11} allows us to endow the set  ${\XExt}(\A,\M)$ with a vector space structure induced from the one of $H_{\AWB}^2(\A,\M)$.
The respective addition in ${\XExt}(\A,\M)$ can be defined by the  Baer sum of crossed extensions of $\A$ by $\M$ presented
in the next section.

\section{Eight-term exact sequence in the cohomology of \textup{AWB}'s}

The detailed proofs of results presented in this section require direct but careful checking. They are almost complete analogs of those given in \cite{Ra,Wa} for groups and Lie algebras, in \cite{Ca2} for Leibniz algebras, and will be largely omitted.

\subsection{Baer sum of crossed extensions}

Suppose we are given a crossed  extension $\mathcal{E} \colon  0\to\M\overset{\sigma}\lra{\MM}\overset{\mu}\lra{\AAA}\overset{\pi}\lra\A\to 0 $ of $\A$ by $\M$. Let $\psi \colon  \A'\to \A$ be a homomorphism of \textup{AWB}'s  and $\phi\colon \M\to \M'$ a homomorphism of $\A$-representations (i.e. a linear map preserving the actions of $\A$). One constructs the \emph{pull back crossed extension} $\mathcal{E}_{\psi}$ of $\A'$ by $\M$ induced by $\psi$, and  the \emph{push out crossed extension}  ${}^\phi\mathcal{E}$  of $\A$ by $\M'$ induced by $\phi$, as follows:
\[
\mathcal{E}_{\psi} \colon  0\to\M\overset{\sigma}\lra{\MM}\overset{\mu{_\psi}}\lra{\AAA_{\psi}}\overset{\pi{_\psi}}\lra\A'\to 0 ,
\]
where
\[
\AAA_{\psi} = \AAA \times_{\A} \A' = \{ (a, a')\in \AAA \times \A' \mid \pi(a)=\psi(a') \}, \quad   \ \pi_{\psi}(a, a')=a', \ \mu_{\psi}(m)= (\mu(m), 0),
\]
and the action of $\AAA_{\psi}$ on $\MM$ is given via the homomorphism $\AAA_{\psi} \to \AAA$, $(a, a')\mapsto a$;
\[
{}^\phi\mathcal{E} \colon  0\to\M'\overset{\sigma_{\phi}}\lra{\MM_{\phi}}\overset{\mu_{\phi}}\lra{\AAA}\overset{\pi}\lra\A\to 0,
\]
where 	
\begin{align*}
	\MM_{\phi} = \M'\times \MM / S, \ \text{with} \ & S=\{\big(\phi(m), -\sigma(m)\big) \mid m\in \M \}, \\
	&\sigma_{\phi}(m') = \cl(m', 0), \ \mu_{\phi}(\cl(m',m_0))= \mu(m_0), \ m'\in \M', \ m_0\in \MM,
\end{align*}
and the action of $\AAA$ on $\MM_{\psi}$ is induced by the action of $\AAA$ on $\MM$.

\begin{proposition}\label{XExt functor} \
	\begin{itemize}
		\item[(i)]	A homomorphism of \textup{AWB}'s  $\psi\colon  \A'\to \A$ induces a map $\psi_{*}\colon {\XExt}(\A,\M) \to {\XExt}(\A',\M) $, $\psi_*(\cl(\mathcal{E}))=\cl(\mathcal{E}_{\psi})$;
		\item[(ii)] A homomorphism of $\A$-representations $\phi\colon \M\to \M'$  induces a map
		$\phi^*\colon {\XExt}(\A,\M) \to {\XExt}(\A,\M') $, $\phi^*(\cl(\mathcal{E}))=\cl({}^{\phi}\mathcal{E})$.
	\end{itemize}	
\end{proposition}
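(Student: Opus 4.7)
The plan is to verify that the two constructions $\mathcal{E} \mapsto \mathcal{E}_\psi$ and $\mathcal{E} \mapsto {}^\phi\mathcal{E}$ produce genuine crossed extensions in the appropriate categories, and then to check that they descend to equivalence classes. Both constructions are direct analogs of the classical pullback and pushout of short exact sequences; the work consists of verifying the crossed-module axioms (CM1), (CM2) and exactness after performing these categorical constructions in the category $\AWB$, and then using the universal properties to propagate morphisms of crossed extensions.

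For (i), I would first observe that $\AAA_\psi = \AAA \times_{\A} \A'$ is a subalgebra of $\AAA \times \A'$ since $\pi$ and $\psi$ are $\AWB$ homomorphisms, so it inherits an $\AWB$ structure. The projection $\AAA_\psi \to \AAA$, $(a,a') \mapsto a$, is an $\AWB$ homomorphism and transports the given action of $\AAA$ on $\MM$ to an action of $\AAA_\psi$ on $\MM$. The crossed-module identities (CM1) and (CM2) for $\mu_\psi(m) = (\mu(m),0)$ follow at once from the corresponding identities for $\mu$, since $\mu_\psi(m)$ projects to $\mu(m)$ and the action of $\AAA_\psi$ factors through $\AAA$. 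Exactness of $0\to\M\to\MM\to\AAA_\psi\to\A'\to 0$ is straightforward: $\sigma$ remains injective, $\Ker\mu_\psi=\sigma(\M)$, $\Im\mu_\psi=\{(a,0)\mid a\in\Im\mu\}=\Ker\pi_\psi$, and $\pi_\psi$ is surjective because $\pi$ is. Finally, the induced $\A'$-representation structure on $\M=\Ker\mu_\psi$ coincides with the pullback along $\psi$ of the given $\A$-representation, again because the $\AAA_\psi$-action on $\MM$ factors through $\AAA$.

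For (ii), the technical core is showing that $\MM_\phi = (\M'\oplus\MM)/S$ inherits an $\AWB$ structure and an $\AAA$-action making $\mu_\phi$ a crossed module. Since $\M'$ is abelian and $\sigma(\M)\subseteq\Z(\MM)$ by Lemma~\ref{lemma cm}~(i), the bilinear operations on $\M'\oplus\MM$ defined by $(m_1',m_0)(m_2',m_0') = (0, m_0 m_0')$ and $[(m_1',m_0),(m_2',m_0')] = (0,[m_0,m_0'])$ satisfy the fundamental identity \eqref{FE} and descend modulo $S$. The $\AAA$-action defined on $\M'$ via the $\A$-action pulled back along $\pi$ and on $\MM$ by the original action preserves $S$ precisely because $\phi$ is a morphism of $\A$-representations, i.e.\ $\phi(a\cdot m) = a\cdot \phi(m)$ and similarly for the bracket; hence it descends to $\MM_\phi$. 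Well-definedness of $\sigma_\phi$ and $\mu_\phi$, the verification of (CM1) and (CM2) for $\mu_\phi$ (using the corresponding properties of $\mu$), and exactness of the induced four-term sequence are then routine.

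The remaining step, where most of the bookkeeping lives, is to verify that both constructions pass to equivalence classes. If $(\alpha,\beta)$ realises an elementary equivalence from $\mathcal{E}$ to another crossed extension of $\A$ by $\M$, then the universal property of the fibered product produces a canonical $\AWB$ homomorphism between the two pullback middles which, paired with $\alpha$, yields an elementary equivalence between the pulled-back extensions; dually, the universal property of the pushout handles case (ii). Since the equivalence relation on $\XExt(\A,\M)$ is generated by elementary equivalences, both assignments descend to well-defined maps on classes. The single most delicate verification is that the $\AAA$-action really descends to the quotient $\MM_\phi$, which depends essentially on $\phi$ commuting with the $\A$-action on $\M$; everything else parallels the standard arguments for groups, Lie algebras and Leibniz algebras cited in the introduction.
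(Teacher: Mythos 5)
Your proposal is correct and follows exactly the route the paper intends: the paper defines the pullback $\mathcal{E}_{\psi}$ and pushout ${}^{\phi}\mathcal{E}$ explicitly just before the proposition and then omits the verification entirely (Section 6 opens by declaring these proofs routine analogues of the group/Lie/Leibniz cases), and your outline supplies precisely those checks. You also correctly isolate the only genuinely delicate points --- that $S$ is absorbed by products because $\sigma(\M)\subseteq\Z(\MM)$, and that the $\AAA$-action descends to $\MM_{\phi}$ because $\phi$ commutes with the $\A$-action --- so nothing is missing.
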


\

Now suppose we are given two crossed extensions of an \textup{AWB} $\A$ by its representation $\M$
\begin{align*}
	\mathcal{E} \colon  0\to\M\overset{\sigma}\lra{\MM}\overset{\mu}\lra{\AAA}\overset{\pi}\lra\A\to 0 \ \text{and} \
	\mathcal{E'} \colon  0\to\M\overset{\sigma'}\lra{\MM'}\overset{\mu'}\lra{\AAA'}\overset{\pi'}\lra\A\to 0.
\end{align*}
Then
\[
\mathcal{E\times E'} \colon  0\to\M\times \M' \xrightarrow{\sigma\times \sigma'}{\MM\times \MM'}\xrightarrow{\mu\times \mu'} {\AAA\times \AAA'}\xrightarrow{\pi\times \pi'} \A\times \A'\to 0\
\]
is also a crossed extension with component-wise action of $\AAA\times \AAA'$ on $\MM\times \MM'$ making $\mu\times \mu'$ to be a crossed module of \textup{AWB}'s.

Let $\Delta\colon  \A\to \A\times \A$,  $\Delta(a)=(a,a)$ and  $\nabla\colon \M\times \M \to \M$, $\nabla(m_1, m_2)= m_1 + m_2$,
be the diagonal and codiagonal maps, respectively. Then the \emph{Baer sum} of $\mathcal{E}$ and $\mathcal{E}'$ is defined to be
\[
\mathcal{E}+\mathcal{E}' = {}^{\nabla}\big((\mathcal{E}\times\mathcal{E}')_{\Delta}\big),
\]
which is also crossed extension of $\A$ by $\M$. The set $\XExt(\A,\M)$ is a vector space with respect to the addition
\[
\cl(\mathcal{E}) + \cl(\mathcal{E}') = \cl (\mathcal{E} + \mathcal{E}'); 
\]
and scalar multiplication given for any $\lambda \in \K$ by
\[
\lambda \cl({\mathcal{E}}) = \cl(\lambda \mathcal{E}), \ \text{where} \ \lambda\mathcal{E} \colon  0\to\M\xrightarrow{\lambda \cdot\sigma}{\MM}\xrightarrow{\;\mu \;}{\AAA}\xrightarrow{\; \pi \;}\A\to 0.
\]
The neutral element in $\XExt(\A,\M)$ is the class of the trivial crossed extension $\mathcal{E}_0$ in \eqref{trivial crossed extension}.

\begin{remark}\label{Remark isomorphism} \
	\begin{itemize}
		\item[(i)] The bijection $\eta\colon {\XExt}(\A,\M)\overset{\approx}\lra H_{\AWB}^2(\A,\M)$ in Theorem~\ref{T11} is an isomorphism of vector spaces.
		\item[(ii)] The maps $\psi_{*}\colon {\XExt}(\A,\M) \to {\XExt}(\A',\M) $ and $\phi^*\colon {\XExt}(\A,\M) \to {\XExt}(\A,\M')$ in Proposition~\ref{XExt functor} are linear maps.
	\end{itemize}	
\end{remark}

\subsection{$\A$-crossed extensions}

Suppose $\A$ is an \textup{AWB}, $\B$  a two-sided ideal of $\A$ and $\M$  a representation of $\A\!/\!\B$.
Consider $\M$ as a representation of $\A$ with trivial action of $\B$ on $\M$.
An 	\emph{$\A$-crossed extension} of $\B$ by $\M$ is a short exact sequence of \textup{AWB}'s
\[
{\e} \colon 	0\lra \M \overset{i}{\lra} \C \overset{\nu}{\lra} \B \lra 0
\]
with an action of $\A$ on $\B$ by operations in $\A$ (see Example~\ref{action} (ii))
and an action of $\A$ on $\C$ such that $\nu$ is a crossed module of \textup{AWB}'s.
Such $\A$-crossed extension always exists, since the trivial extension
\[
{\sf e}_0 \colon 	0\lra \M {\lra} \M \times \B {\lra} \B \lra 0
\]
is an example, with component-wise action of $\A$ on $\M \times \B$.

Two  $\A$-crossed extensions $\e\colon  0\to \M \overset{i}{\to} \C \overset{\nu}{\to} \B \to 0$ and
$\e'\colon  0\to \M \overset{i'}{\to} \C' \overset{\nu'}{\to} \B \to 0 $ are called \emph{congruent}
if there exists a homomorphism $\varphi\colon  \C \to \C'$ such that
$(\varphi, \id_{\B})$ is a morphism of crossed modules and $\varphi_{\mkern 1mu \vrule height 2ex\mkern2mu {\M}}= \id_{\M}$.
This induces an equivalence relation on the set of all $\A$-crossed extensions of $\B$ by $\M$. Let $\XExt_{\A}(\B, \M)$ denote the set of equivalence classes.

Let ${\e}\colon 	0\to \M \overset{i}{\lra} \C \overset{\nu}{\lra} \B \to 0$ and ${\e}' \colon 	0\to \M \overset{i'}{\lra} \C' \overset{\nu'}{\lra} \B \to 0$
be $\A$-crossed extensions of $\B$ by $\M$. Then
\[
{\sf e} \times {\sf e}'\colon 	0\to \M\times \M \xrightarrow{i\times i'} \C \times \C' \xrightarrow{\nu\times \nu'} \B\times \B \to 0
\]
is an $\A\times \A$-crossed extension of $\B\times \B$ by $\M\times \M$. The diagonal map $\Delta\colon \B \to \B\times \B$, $\Delta(b)=(b,b)$ is a homomorphism of \textup{AWB}'s and induces the pull back $\A$-crossed extension
\[
\big({\e} \times {\e}'\big)_{\Delta}\colon 	0\to \M\times \M  {\lra}  \C_{\Delta} {\lra} \B \to 0,
\]
where $\C_{\Delta}= \{(c, c', b)\in \C\times \C\times \B \mid (\nu(c), \nu'(c'))=(b,b)\} $, with the component-wise action of $\A$ on $\C_{\Delta}$.
On the other hand, the codiagonal map $\nabla\colon  \M\times \M \to \M $, $\nabla(m_1,m_2) = m_1 +m_2 $
is a homomorphism of $\A$-representations and induces the push out $\A$-crossed extension
\[
{}^{\nabla}\big({\sf e} \times {\sf e}'\big)\colon 	0\to \M  {\lra} {}^{\nabla}\!\C {\lra} \B \times \B \to 0,
\]
where ${}^{\nabla}\!\C = \big(\M\times \C\times \C'\big) / S$,  with $S= \{ (m_1+m_2, - i(m_1), -i(m_2)) \mid m_1, m_2 \in \M \}$.

Then the \emph{Baer sum} of two $\A$-crossed extensions $\e$ and $\e'$ is defined to be
\[
\e+\e' = {}^{\nabla}\big((\e\times\e')_{\Delta}\big),
\]
and it is also an $\A$-crossed extension of $\B$ by $\M$.
Then the set $\XExt_{\A}(\B, \M)$ is a vector space with respect to the addition given by
\[
\cl(\e) + \cl(\e')=\cl(\e+\e'),
\]
and scalar multiplication  defined for any $\lambda\in \K$ by
\[
\lambda \cl(\e)= \cl{(\lambda\e)}, \ \text{where} \ \lambda\e\colon  0\to \M \xrightarrow{\lambda \cdot i} \C \xrightarrow{\; \nu  \;}\B \to 0 .
\]

Note that the neutral element with respect to the addition in  $\XExt_{\A}(\B, \M)$ is the equivalence class $\cl(\e_0)$ of the trivial extension
$\e_0$. Furthermore, if  $\cl(\e)=\cl(\e_0) =0$,
then the sequence ${\e}\colon 	0\to \M \overset{i}{\lra} \C \overset{\nu}{\lra} \B \to 0$ is split by an $\A$-homomorphism,
i.e. there exists a homomorphism of \textup{AWB}'s $s\colon \B\to \C$ which preserves the action of $\A$ and $\nu \circ s =\id_{\B}$.

\

Let $\B$ be a two-sided ideal of an \textup{AWB} $\A$ and $\M$  a representation of $\A\!/\!\B$. Consider $\M$ as a representation of $\A$ with trivial action of $\B$
on $\M$. Then any abelian extension of $\A$ by $\M$
\[
E\colon  0\to \M \overset{i}{\lra} \E \overset{p}{\lra} \A\to 0
\]
defines an $\A$-crossed extension
\[
\e_{E}\colon  0\to \M {\lra} \E\times_{\A} \B \overset{\pi_2}{\lra} \B\to 0,
\]
where $\E\times_{\A} \B = \{(e,b)\in \E\times \B  \mid p(e)=b  \} \cong p^{-1}(B)$, $\pi_2(e,b)=b$ and the action of $\A$ on $\E\times_{\A} \B$ is given by
\begin{align*}
	&a\cdot (e,b) = (e'e, ab),   & \{	a, (e,b) \}= ([e',e], [a,b]),\\
	& (e,b) \cdot a = (ee', ba), & \{	(e,b), a \}= ([e,e'], [b,a]),
\end{align*}
for all $(e,b)\in \E\times_{\A}\B$, $a\in \A$ and  $e'\in E$ is any element such that $p(e')= a$.

The correspondence $E\mapsto \e_E$ defines a linear map $r\colon  \Ext_{\AWB}(\A, \M) \to \XExt_{\A}(\B, \M)$, $r(\cl(E))= \cl(\e_E)$.

\begin{lemma}\label{lemma exact seq1}
	Given a two-sided ideal $\B$ of an \textup{AWB} $\A$ and a representation $\M$ of $\A\!/\!\B$, there is an exact sequence of vector spaces
	\begin{equation*}
		\Ext_{\AWB}(\A\!/\!\B, \M) \overset{\tau^{\star}}{\lra} \Ext_{\AWB}(\A, \M)\overset{r}{\lra} \XExt_{\A}(\B, \M).
	\end{equation*}
\end{lemma}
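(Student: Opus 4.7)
The plan is to establish exactness at $\Ext_{\AWB}(\A, \M)$ by showing (a) $r \circ \tau^{\star}=0$ and (b) $\Ker(r)\subseteq \Im(\tau^{\star})$, where $\tau^{\star}$ is the pullback map induced by the projection $\tau\colon \A\to \A/\B$. Concretely, for $E'\colon 0\to \M\to \E'\overset{p'}\to \A/\B\to 0$, the extension $\tau^{\star}E'$ is $0\to \M\to \E'\times_{\A/\B}\A\to \A\to 0$; and for $E\colon 0\to\M\to \E\overset{p}\to\A\to 0$, the $\A$-crossed extension $r(E)$ is $0\to \M\to \E\times_{\A}\B\overset{\pi_2}\to \B\to 0$ with the $\A$-action described in the subsection.

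For (a), given $E'$ and its pullback $\E=\E'\times_{\A/\B}\A$, I would observe that an element of $\E\times_{\A}\B$ is a triple $((e',a),b)$ with $a=b\in \B$ and $p'(e')=\tau(b)=0$, so $e'\in \Ker(p')=\M$. Thus $\E\times_{\A}\B$ is canonically isomorphic to $\M\times \B$; under this identification the $\A$-action becomes the componentwise one (using that $\B$ acts trivially on $\M$ and that the $\A$-action on $\M$ factors through $\tau$), and the map $b\mapsto (0,b)$ is an $\A$-equivariant \textup{AWB}-homomorphism splitting $\pi_2$. Hence $r(\tau^{\star}(\cl(E')))=\cl({\sf e}_0)=0$.

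For (b), suppose $r(\cl(E))=0$. Then there is an $\A$-equivariant homomorphism of \textup{AWB}'s $s\colon \B\to \E\times_{\A}\B$ with $\pi_2 s=\id_{\B}$; writing $s(b)=(t(b),b)$ gives an \textup{AWB}-homomorphism $t\colon \B\to \E$ satisfying $p\circ t=\id_{\B}$. Unwinding the $\A$-equivariance of $s$ against the formula $a\cdot(e,b)=(e_a e,ab)$ (with $e_a\in p^{-1}(a)$ arbitrary) yields the identities $e\cdot t(b)=t(p(e)b)$, $t(b)\cdot e=t(bp(e))$, $[e,t(b)]=t[p(e),b]$, $[t(b),e]=t[b,p(e)]$ for all $e\in \E$, $b\in \B$. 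These identities show that $t(\B)$ is a two-sided ideal of $\E$, so we may form $\E':=\E/t(\B)$ and use $p$ to induce a surjection $\bar p\colon \E'\to \A/\B$. Since $p^{-1}(\B)=\M+t(\B)$ with $\M\cap t(\B)=0$ (because $t(b)\in \M$ forces $b=p(t(b))=0$), one obtains a short exact sequence $E'\colon 0\to \M\to \E'\to \A/\B\to 0$, whose induced $\A/\B$-representation structure on $\M$ is the given one. Finally, the homomorphism $\E\to \E'\times_{\A/\B}\A$, $e\mapsto (\bar e,p(e))$, is easily seen to be injective (from $t(\B)\cap \M=0$) and surjective (given $(\bar e_0, a)$ with $\tau(p(e_0))=\tau(a)$, set $b:=p(e_0)-a\in \B$; then $e_0-t(b)$ is a preimage), giving an equivalence $\tau^{\star}E'\sim E$ and hence $\tau^{\star}(\cl(E'))=\cl(E)$.

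The main obstacle is the bookkeeping in step (b): one must carefully translate the $\A$-equivariance of $s$ into multiplicative and bracket identities that force $t(\B)$ to be a two-sided ideal of $\E$, not merely a sub-\textup{AWB}, and check well-definedness of $a\cdot(e,b)=(e_a e,ab)$ (which relies on $\B$ acting trivially on $\M$). Once these equivariance identities are in hand, the construction of $\E'=\E/t(\B)$ and the verification that it realizes $E$ as $\tau^{\star}E'$ are straightforward diagram chases analogous to the classical group and Lie algebra cases.
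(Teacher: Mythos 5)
Your proposal is correct and follows essentially the same route as the paper: the paper also proves exactness at $\Ext_{\AWB}(\A,\M)$ by noting $r\circ\tau^{\star}=0$ and, given a splitting $s$ of $\e_E$, passing to the quotient $\E_s=\E/\Im(\pi_1\circ s)$ (your $\E/t(\B)$) to produce the extension of $\A/\B$ whose pullback is $E$. Your write-up merely supplies the details the paper leaves implicit (the equivariance identities forcing $t(\B)$ to be a two-sided ideal, $\M\cap t(\B)=0$, and the explicit isomorphism $\E\cong\E'\times_{\A/\B}\A$), all of which check out.
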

\begin{proof}
	The map $\tau^{\star}$ is induced by the canonical map $\tau\colon  \A\to \A\!/\!\B$. It is easy to see that the composition $r\circ \tau^{\star}$ is trivial. To show that $\Ker(r)\subseteq \Im(\tau^{\star})$, suppose we are given an extension
	$E\colon  0\to \M {\lra} \E \overset{p}{\lra} \A\to 0 $ which represents an element in $\Ext_{\AWB}(\A, \M)$ such that
	$r(\cl(E))= \cl(\e_E) =0$, i.e. $\e_{E}\colon  0\to \M {\lra} \E\times_{\A} \B \overset{\pi_2}{\lra} \B\to 0$ is split by an $\A$-homomorphism $s\colon B\to \E\times_{\A} \B$.   Then $\Im(\pi_1\circ s)$ is a two-sided ideal of the \textup{AWB} $\E$, where $\pi_1\colon  \E\times_{\A} \B\to \E $, $\pi_1(e,b)= e$. Let us denote $\E_s= \E/\Im(\pi_1\circ s)$. Then $p$ induces an epimorphism $p_s\colon  \E_s \to \A\!/\!\B $ whose kernel is $\M$. So we constructed an extension
	\[
	E_s \colon  0\to \M {\lra} \E_s \overset{p_s}{\lra} \A\!/\!\B \to 0
	\]
	of the \textup{AWB} $\A\!/\!\B$ by its representation $\M$ and it is readily seen that $\tau^{\star}(\cl(E_s)) = \cl(E)$.
\end{proof}

Let us remark that the exact sequence in Lemma~\ref{lemma exact seq1} can be rewritten as the following exact sequence of vector spaces
\begin{equation}\label{exact seq1}
	H^{1}_{\AWB}(\A\!/\!\B, \M) \overset{\tau^{\star}}{\lra} H^{1}_{\AWB}(\A, \M)\overset{r}{\lra} \XExt_{\A}(\B, \M).
\end{equation}

\subsection{Eight-term exact sequence}
As in the previous subsection, we suppose $\A$ is an \textup{AWB}, $\B$  a two-sided ideal of $\A$ and $\M$  a representation of $\A\!/\!\B$.
Then any $\A$-crossed extension ${\e} \colon 	0\lra \M \overset{i}{\lra} \C \overset{\nu}{\lra} \B \lra 0$
of $\B$ by $\M$ defines a crossed extension of the \textup{AWB} $\A\!/\!\B$ by its representation $\M$
\[
{\EE}_{\e} \colon 	0\lra \M \overset{i}{\lra} \C \overset{\nu}{\lra} \A \overset{\tau}\lra  \A\!/\!\B \lra 0.
\]
This correspondence defines a linear map
\[
\gamma \colon  \XExt_{\A}(\B, \M) \lra \XExt(\A\!/\!\B, \M), \quad \gamma(\cl(\e))=\cl({\EE}_{\e})
\]

\begin{lemma}\label{proposition exact seq2 }
	Given a two-sided ideal $\B$ of an \textup{AWB} $\A$ and a representation $\M$ of $\A\!/\!\B$, there is an exact sequence of vector spaces
	\begin{equation*}
		\Ext_{\AWB}(\A, \M)\overset{r}{\lra} \XExt_{\A}(\B, \M)  \overset{\gamma}{\lra} \XExt(\A\!/\!\B, \M)\overset{\tau^*}{\lra} \XExt(\A, \M),
	\end{equation*}
	where $\tau^*$ is induced by the canonical map $\tau\colon \A\to \A\!/\!\B$.
\end{lemma}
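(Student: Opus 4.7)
The plan is to establish exactness at the two middle terms, $\XExt_{\A}(\B,\M)$ and $\XExt(\A\!/\!\B,\M)$, mirroring the arguments given for groups and Lie algebras in \cite{Ra,Wa} and for Leibniz algebras in \cite{Ca2}, while tracking the extra bracket conditions at each step.

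For exactness at $\XExt_{\A}(\B,\M)$, given an abelian extension $E\colon 0\to\M\to\E\overset{p}{\to}\A\to 0$, the class $\gamma r(\cl E)$ is represented by
\[
\EE_{\e_E}\colon\;0\to\M\to\E\times_{\A}\B\to\A\overset{\tau}{\to}\A\!/\!\B\to 0,
\]
with middle map $(e,b)\mapsto b$. To show this is equivalent to the trivial crossed extension $\mathcal{E}_0$ of $\A\!/\!\B$ by $\M$, I would exploit the bijection $\XExt(\A\!/\!\B,\M)\cong H^2_{\AWB}(\A\!/\!\B,\M)$ from Theorem~\ref{T11}: tracing the cocycle construction~\eqref{eq7}, \eqref{eq7*} for $\EE_{\e_E}$, using linear sections $s\colon\A\!/\!\B\to\A$ of $\tau$ and $\rho\colon\B\to\E\times_{\A}\B$ of the middle map, one expresses the resulting $1$-cocycle $(\mathsf{f},\mathsf{g})$ in terms of a $2$-cocycle of $E$ together with $s$ and $\rho$. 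A direct computation, using crucially that $\B$ acts trivially on $\M$, then identifies $(\mathsf{f},\mathsf{g})$ as the coboundary of a $0$-cochain built from the splitting data of $E$, so $\gamma r(\cl E)=0$. For the converse $\Ker\gamma\subseteq\Im r$, starting from $\e\colon 0\to\M\to\C\overset{\nu}{\to}\B\to 0$ with $\cl(\EE_\e)=\cl(\mathcal{E}_0)$, the same cohomological vanishing produces a $2$-cocycle corresponding to an abelian extension $E\colon 0\to\M\to\E\to\A\to 0$, and one verifies that the pullback $\E\times_{\A}\B$ is isomorphic as $\A$-crossed extension to $\C$, so $r(\cl E)=\cl\e$.

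For exactness at $\XExt(\A\!/\!\B,\M)$, the class $\tau^*\gamma(\cl\e)$ is represented by
\[
\tau^*\EE_\e\colon\;0\to\M\to\C\overset{\mu'}{\to}\A\times_{\A\!/\!\B}\A\overset{\pi_2}{\to}\A\to 0,\qquad\mu'(c)=(\nu(c),0).
\]
The diagonal $\Delta\colon\A\to\A\times_{\A\!/\!\B}\A$, $a\mapsto(a,a)$, is an AWB homomorphism splitting $\pi_2$, and the pair $(i,\Delta)$ defines a morphism from the trivial crossed extension $\mathcal{E}_0'\colon 0\to\M=\M\to\A=\A\to 0$ of $\A$ by $\M$ into $\tau^*\EE_\e$. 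Equivariance is automatic since the action of $\A\times_{\A\!/\!\B}\A$ on $\C$ factors through the first projection and restricts on $\M\subseteq\C$ to the given representation structure of $\A\!/\!\B$ on $\M$; hence $\tau^*\gamma=0$. Conversely, for $\mathcal{F}\colon 0\to\M\to\MM\overset{\mu}{\to}\AAA\overset{\pi}{\to}\A\!/\!\B\to 0$ with $\cl\mathcal{F}\in\Ker\tau^*$, the triviality of $\tau^*\mathcal{F}$ supplies, after possibly replacing $\mathcal{F}$ inside its equivalence class, an AWB section $\chi\colon\A\to\AAA$ of $\pi$ with $\pi\chi=\tau$; transporting the crossed module structure of $\mu$ along $\chi$ then yields an $\A$-crossed extension $\e$ of $\B$ by $\M$ with $\gamma(\cl\e)=\cl\mathcal{F}$.

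The main obstacle throughout is the verification that each map assembled from linear sections genuinely respects multiplication, bracket, and $\A$-equivariance. These checks rest on repeated application of the fundamental identity~\eqref{FE}, the crossed module axioms (CM1), (CM2), and the hypothesis that $\B$ acts trivially on $\M$, in exactly the spirit of the computations carried out in the proofs of Theorems~\ref{theorem extensible} and~\ref{T11}; as announced at the beginning of the section, the routine details will be omitted.
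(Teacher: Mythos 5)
Your argument for exactness at $\XExt(\A\!/\!\B,\M)$ --- the morphism $(i,\Delta)$ from the trivial crossed extension into $(\EE_{\e})_{\tau}$ for the inclusion $\Im(\gamma)\subseteq\Ker(\tau^*)$, and using the splitting of the pulled-back extension to obtain an \textup{AWB}-map $\A\to\AAA$ lifting $\tau$ along which the crossed module is pulled back for the reverse inclusion --- is exactly the proof given in the paper, which writes your $\chi$ as $\pi_1\circ s$ and realizes the transported crossed module explicitly as $\MM\times_{\Im(\mu)}\B$. For exactness at $\XExt_{\A}(\B,\M)$, which the paper simply defers to the Leibniz-algebra case of \cite{Ca2}, your cohomological sketch via the bijection of Theorem~\ref{T11} is a reasonable substitute at a comparable level of detail, so the proposal is correct and takes essentially the same route.
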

\begin{proof} We leave the proof of the exactness in $\XExt_{\A}(\B, \M)$ as an exercise, only noting that it is similar to the case of Leibniz algebras
	\cite[Theorem 2]{Ca2}. So, we will prove that $\Im(\gamma)= \Ker(\tau^*)$.
	
	First, show that  $\gamma \circ \tau^* =0$. Take  ${\e} \colon 	0\lra \M \overset{i}{\lra} \C \overset{\nu}{\lra} \B \lra 0$
	to be an $\A$-crossed extension which represents an element in  $\XExt_{\A}(\B, \M) $. Then $(\gamma \circ \tau^*) (\cl(\e)) = \cl\big((\EE_{\e})_{\tau}\big) $, where
	\[
	(\EE_{\e})_{\tau} \colon 	0\lra \M \overset{i}{\lra} \C \overset{\nu_{\tau}}{\lra} \A\times_{\A\!/\!\B} \A \overset{\pi_{\tau}}\lra  \A \lra 0,
	\]
	with $\nu_{\tau}(c)=(\nu(c), 0)$ and $\pi_{\tau}(a,a')=a'$ for all $c\in \C$ and $(a,a')\in \A\times_{\A\!/\!\B} \A$.
	Since the diagram
	\[
		\xymatrix{
			\EE_{0}\colon 	0 \ar[r] & \M \ar@{}|{\parallel}[d] \ar[r]^{=} & \ \M \ar[d]_{i}
			\ar[r]^{0} & \quad \A \quad \ar[d]_{\beta} \ar[r]^{=} & \A \ar@{}|{\parallel}[d] \ar[r] &
			0 \\
			(\EE_{\e})_{\tau}\colon 	0 \ar[r] & \M \ar[r]^{i} & \C \ar[r]^{\nu_{\tau} \quad } &  \A\times_{\A\!/\!\B} \A \ar[r]^{\quad \pi_{\tau}} & \A
			\ar[r] & 0   }
	\]
	is commutative, where $\beta(a)=(a,a)$, and $(i, \beta)\colon (\M\overset{0}{\lra}\A)\to (\C \overset{\nu_{\tau}}{\lra}\A\times_{\A\!/\!\B} \A )$
	is a morphism of crossed modules, we deduce that $\cl\big((\EE_{\e}\big)_{\tau})= \cl(\EE_{0})=0$.
	
	Now suppose $\EE \colon  0\to\M\overset{\sigma}\lra{\MM}\overset{\mu}\lra{\AAA}\overset{\pi}\lra\A\!/\!\B\to 0$
	is a crossed extension of $\A\!/\!\B$ by $\M$ which represents an element in $ \XExt(\A\!/\!\B, \M)$  such that $\tau^*(\cl(\EE))=0$. 
	This means that the crossed extension
	\[
	\EE_{\tau} \colon  0\to\M\overset{\sigma}\lra{\MM}\overset{\mu_{\tau}}\lra{\AAA\times_{\A\!/\!\B} \A}\overset{\pi_{\tau}}\lra\A\to 0
	\]
	is split, i.e. there is a homomorphism of \textup{AWB}'s $s\colon  \A\to \AAA\times_{\A\!/\!\B} \A$ such that $\pi_{\tau}\circ s = \id_{\A}$.
	Let $\pi_1\colon  \AAA\times_{\A\!/\!\B} \A \to \AAA$ be the canonical projection. 
	Then we denote by $h$ the restriction  $\pi_1\circ s_{\mkern 1mu \vrule height 2ex\mkern2mu {\B}}$.
	Clearly, $\Im(h)\subseteq \Ker(\pi)=\Im(\mu)$. So, $h\colon \B\to \Im(\mu)$  is a homomorphism of \textup{AWB}'s. It is easy to see that
	\[
	\e_{\EE}\colon  0\to \M {\lra} \MM \times_{\Im(\mu)} \B \overset{\pi_2}{\lra} \B\to 0
	\]
	is an $\A$-crossed extension, with the action of $\A$ on $\MM \times_{\Im(\mu)} \B $  given by
	\begin{align*}
		&a\cdot (m, b)= (\pi_1s(a)\cdot m, ab), \quad & \{a, (m, b)\}= \big(\{\pi_1s(a), m\}, [a, b]\big), \\
		& (m, b)\cdot a = (m \cdot \pi_1s(a), ba), \quad & \{(m, b), a\} = \big(\{m, \pi_1s(a)\}, [ b,a]\big),
	\end{align*}
	for $a\in \A$, $(m, b) \in \MM \times_{\Im(\mu)} \B$, and we have $\gamma(\cl(\e_{\EE}))= \cl(\EE)$. Hence $\Ker(\tau^*)\subseteq \Im (\gamma)$.
\end{proof}

Taking in mind the isomorphisms in Remark~\ref{Remark isomorphism} (i) and \eqref{iso ext h1}, we get immediately the following exact sequence of vector spaces
\begin{equation}\label{exact seq2 }
	H^{1}_{\AWB}(\A, \M)\overset{r}{\lra}	\XExt_{\A}(\B, \M)  \overset{\gamma}{\lra} H^{2}_{\AWB}(\A\!/\!\B, \M) \overset{\tau^*}{\lra} H^{2}_{\AWB}(\A, \M).
\end{equation}

To state the last result we need to recall from \cite{Ca} that $\B_{\ab}= \B/[[\B,\B]]$ has a natural structure of an $\A\!/\!\B$-representation  and  $\Hom_{\A\!/\!\B}(\B_{\ab}, \M)$ denotes the vector space of all linear maps from $\B_{\ab}$ to $\M$ preserving the action of $\A\!/\!\B$.

\begin{theorem}\label{Theo 8 term }
	Let $0\to \B\to \A\overset{\tau}\to \A\!/\!\B \to 0$ be a short exact sequence of \textup{AWB}'s and $\M$ be a representation of $\A\!/\!\B$.  Then there is an exact sequence of vector spaces
	\begin{align*}
		0\lra H^0_{\AWB}(\A\!/\!\B, \M) \lra H^0_{\AWB}(\A, \M) \lra \Hom_{\A\!/\!\B}(\B_{\ab}, \M) \lra H^{1}_{\AWB}(\A\!/\!\B, \M) \\
		\overset{\tau^{\star}}\lra
		H^{1}_{\AWB}(\A, \M)\overset{r}{\lra}	\XExt_{\A}(\B, \M)  \overset{\gamma}{\lra} H^{2}_{\AWB}(\A\!/\!\B, \M) \overset{\tau^*}{\lra} H^{2}_{\AWB}(\A, \M).
	\end{align*}
\end{theorem}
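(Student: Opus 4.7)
The plan is to assemble the desired eight-term sequence by concatenating three shorter exact sequences, each of which is already available. The first five terms, from $0$ through the map $\tau^{\star}\colon H^1_{\AWB}(\A\!/\!\B, \M) \to H^1_{\AWB}(\A, \M)$, form exactly the five-term exact cohomology sequence for algebras with bracket established in \cite{Ca}. I would invoke this sequence directly, which simultaneously provides exactness at $H^0_{\AWB}(\A\!/\!\B, \M)$, $H^0_{\AWB}(\A, \M)$, $\Hom_{\A\!/\!\B}(\B_{\ab}, \M)$, and $H^1_{\AWB}(\A\!/\!\B, \M)$.

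For the remaining three junctions I would use the two lemmas proved earlier in this section. Exactness at $H^1_{\AWB}(\A, \M)$ is the content of the exact sequence \eqref{exact seq1} obtained from Lemma \ref{lemma exact seq1}, while exactness at both $\XExt_{\A}(\B, \M)$ and $H^2_{\AWB}(\A\!/\!\B, \M)$ is the content of the exact sequence \eqref{exact seq2 } obtained from Lemma \ref{proposition exact seq2 }. Because the isomorphisms $H^1_{\AWB}(\A, \M) \cong \Ext_{\AWB}(\A, \M)$ from \eqref{iso ext h1} and $H^2_{\AWB}(\A\!/\!\B, \M) \cong \XExt(\A\!/\!\B, \M)$ from Theorem \ref{T11}, Remark \ref{Remark isomorphism}, are linear, the three source sequences live in the appropriate vector-space setting and can be glued.

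The only verification needed is that the maps agree at the two splicing points. The terminal map of the five-term sequence and the initial map $\tau^{\star}$ of \eqref{exact seq1} are both induced by the canonical projection $\tau\colon \A \to \A\!/\!\B$, so they coincide. Likewise, the terminal map $r$ of \eqref{exact seq1} and the initial map $r$ of \eqref{exact seq2 } are both defined by the pullback-and-restrict construction $\cl(E) \mapsto \cl(\e_E)$, so these coincide as well. Once these identifications are noted, the three sequences splice into a single exact sequence of the desired length.

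The main obstacle, strictly speaking, already lies behind us: it was the construction of the map $\gamma\colon \XExt_{\A}(\B,\M) \to \XExt(\A\!/\!\B, \M)$ and the verification $\Im(\gamma) = \Ker(\tau^*)$ inside the proof of Lemma \ref{proposition exact seq2 }, together with the earlier identification of $\XExt(\A\!/\!\B,\M)$ with $H^2_{\AWB}(\A\!/\!\B,\M)$ in Theorem \ref{T11}. Given those ingredients and the five-term sequence from \cite{Ca}, the eight-term sequence assembles itself by purely formal concatenation, and I would present the proof as such, without reproducing the routine diagram chases that have been carried out in the cited results.
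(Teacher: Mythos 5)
Your proposal is correct and follows essentially the same route as the paper: the authors likewise obtain the eight-term sequence by combining the five-term sequence from \cite{Ca} (together with the isomorphism \eqref{iso h0}) with the exact sequences \eqref{exact seq1} and \eqref{exact seq2 }. Your additional remarks on the compatibility of the maps at the splicing points are a harmless elaboration of what the paper leaves implicit.
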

\begin{proof}
	This is a combination of \cite[Exact Sequence (3) and Theorem 2.9]{Ca}, isomorphism~\eqref{iso h0} and exact sequences \eqref{exact seq1} and \eqref{exact seq2 }.
\end{proof}

\subsection*{Acknowledgements}

The authors were supported by  Agencia Estatal de Investigaci\'on (Spain), grant PID2020-115155GB-I00.
Emzar Khmaladze was financially supported by EU fellowships for Georgian researchers, 2023 (57655523) and by  Shota Rustaveli National Science Foundation of Georgia, grant FR-22-199. He is grateful to the University of Santiago de Compostela for its hospitality. Third author was also supported by Xunta de Galicia
through the Competitive Reference Groups (GRC), ED431C 2019/10.

\end{document}